\title{Fast embedding of spanning trees in biased Maker-Breaker games}
\author{{Asaf Ferber \thanks{School of Mathematical Sciences, Raymond and Beverly Sackler Faculty of Exact Sciences, Tel Aviv University, Tel Aviv, 69978, Israel. Email: ferberas@post.tau.ac.il.}} \quad {Dan Hefetz \thanks{Institute of Theoretical Computer Science, ETH Zurich, CH-8092 Switzerland. Email: dan.hefetz@inf.ethz.ch.}} \quad {Michael Krivelevich
\thanks{School of Mathematical Sciences, Raymond and Beverly Sackler Faculty of Exact
Sciences, Tel Aviv University, Tel Aviv, 69978, Israel. Email: krivelev@post.tau.ac.il.
Research supported in part by USA-Israel BSF Grant 2006322 and by grant 1063/08 from the Israel
Science Foundation.}}}
\newif\ifnotesw\noteswtrue
\def\({\left(}
\def\){\right)}
\newtheorem{theorem}{Theorem}[section]
\newtheorem{lemma}[theorem]{Lemma}
\newtheorem{claim}[theorem]{Claim}
\newtheorem{proposition}[theorem]{Proposition}
\newtheorem{problem}[theorem]{Problem}
\renewcommand{\epsilon}{\varepsilon}
\newenvironment{proof}{\noindent{\bf Proof\,}}{\hfill$\Box$}
\begin{document}
\maketitle

\begin{abstract}
Given a tree $T=(V,E)$ on $n$ vertices, we consider the $(1 : q)$ Maker-Breaker
tree embedding game ${\mathcal T}_n$. The board of this game is the edge set of 
the complete graph on $n$ vertices. Maker wins ${\mathcal T}_n$ if and only if he 
is able to claim all edges of a copy of $T$. We prove that there exist
real numbers $\alpha, \varepsilon > 0$ such that, for sufficiently large $n$ 
and for every tree $T$ on $n$ vertices with maximum degree at most $n^{\varepsilon}$, 
Maker has a winning strategy for the $(1 : q)$ game ${\mathcal T}_n$,
for every $q \leq n^{\alpha}$. Moreover, we prove that
Maker can win this game within $n + o(n)$ moves which is clearly asymptotically optimal.
\end{abstract}

\section{Introduction}
\label{sec::intro}

Let $X$ be a finite set and let ${\mathcal F} \subseteq 2^X$ be a family
of subsets. In the $(p : q)$ Maker-Breaker game $(X,{\mathcal F})$,
two players, called Maker and Breaker, take turns in claiming previously
unclaimed elements of $X$, with Breaker going first. The set $X$ is called
the \emph{board} of the game and the members of ${\mathcal F}$ are referred
to as the \emph{winning sets}. Maker claims $p$ board elements per turn,
whereas Breaker claims $q$. The parameters $p$ and $q$ are called the \emph{bias}
of Maker and of Breaker respectively. Maker wins the game as soon as he occupies all
elements of some winning set. If Maker does not fully occupy any winning set 
by the time every board element is claimed by some player, then Breaker 
wins the game. We say that the $(p : q)$ game $(X,{\mathcal F})$ is 
\emph{Maker's win} if Maker has a strategy that ensures his win in this game 
(in some number of moves) against any strategy of
Breaker, otherwise the game is \emph{Breaker's win}. Note that $p, q,
X$ and ${\mathcal F}$ determine whether the game is Maker's win
or Breaker's win.

Let $T=(V,E)$ be a tree on $n$ vertices. In this paper we study the
biased tree embedding game ${\mathcal T}_n$. The board of ${\mathcal T}_n$
is $E(K_n)$, that is, the edge set of the complete graph on $n$ vertices.
The winning sets of ${\mathcal T}_n$ are the copies of $T$ in $K_n$. 

There are three natural questions which come to one's mind regarding 
the game ${\mathcal T}_n$: 
\begin{description} 
\item [$1.$] For which trees $T$ can Maker win the $(1 : q)$ game ${\mathcal T}_n$?
\item [$2.$] What is the largest positive integer $q$ for which Maker can win the 
$(1 : q)$ game ${\mathcal T}_n$? 
\item [$3.$] How fast can Maker win the $(1 : q)$ game ${\mathcal T}_n$ (assuming it is Maker's win)?
\end{description}

It is easy to see that, playing a $(1 : q)$ game on $E(K_n)$, Breaker can ensure 
that the maximum degree of Maker's graph will not exceed $\frac{n-1}{\lfloor q/2 \rfloor + 1}$. Hence, Maker cannot build any tree on $n$ vertices whose maximum degree exceeds this bound.
 
It is well known (see~\cite{CE}) that if $q \geq \left(1 + \varepsilon \right) \frac{n}{\log n}$, then Breaker can isolate a vertex in Maker's graph. In particular, playing against such Breaker's bias, Maker cannot build any spanning graph. On the other hand, it was proved in~\cite{K} that, if $q \leq \left(1 - \varepsilon \right) \frac{n}{\log n}$, then Maker can build a Hamilton path (in fact, even a Hamilton cycle). Hence, there are spanning trees for 
which $\log n/n$ is the breaking point between Maker's win and Breaker's win. Note that
the requirement that $T$ is a spanning tree plays a crucial role in the aforementioned 
bound on Breaker's bias. Indeed, it was proved by Beck~\cite{Beck94} that, for 
sufficiently large $n$, if $q \leq n/(100d)$, then, playing a $(1 : q)$ game on $E(K_n)$,
Maker can build a $(q,d)$-tree-universal graph, that is, a graph which contains a copy
of \emph{every} tree on $q$ vertices with maximum degree at most $d$.

Clearly, Maker cannot build a spanning tree of $K_n$ in less than $n-1$ moves. For 
certain trees this trivial lower bound is tight. Indeed, it was proved 
in~\cite{HKSS} that, playing a $(1 : 1)$ game on $E(K_n)$, Maker can claim all
edges of a Hamilton path of $K_n$ in $n-1$ moves. Moreover, if Maker just wants to
build a connected spanning graph, that is, he does not have to declare in advance
which spanning tree he intends to build, then he can do so in $n-1$ moves even
in a $(1 : (1 - \varepsilon) n/\log n)$ game (see~\cite{GS}). On the other hand, it was 
conjectured by Beck~\cite{Beck94} and subsequently proved by Bednarska~\cite{Bednarska} 
that, playing a $(1 : q)$ game on $E(K_n)$, where $q \geq c n$ for an arbitrarily small constant $c > 0$, Maker \emph{cannot} build a complete binary tree on $q$ vertices in optimal time, that is, in $q-1$ moves. It seems plausible that, assuming Maker can win the $(1 : q)$ game ${\mathcal T}_n$, he can in fact win it within $n + o(n)$ moves.    

Our main result gives a partial answer to the three aforementioned questions.

\begin{theorem} \label{spanningTree}
Let $0 < \alpha < 0.005$ and $0 < \varepsilon < 0.05$ be real numbers and let $n$ be sufficiently large (that is, $n \geq n_0(\alpha, \varepsilon)$). Let $T=(V,E)$ be a tree
on $n$ vertices, with maximum degree $\Delta(T)\leq n^{\varepsilon}$.
Then, Maker (as the first or second player) can win the $(1 : b)$
Maker-Breaker tree embedding game ${\mathcal T}_n$, for
every $b \leq n^{\alpha}$, in $n+o(n)$ moves.
\end{theorem}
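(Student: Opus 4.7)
The plan is to split $T$ into an easy-to-embed ``core'' subtree $T' \subseteq T$ containing almost all vertices, together with a small set of leftover vertices that will be attached to $T'$ as leaves by an auxiliary biased matching game. First I would prove a tree decomposition lemma: any $T$ on $n$ vertices with $\Delta(T) \le n^{\varepsilon}$ admits a subtree $T'$ with $|V(T')| = n - m$, where $m = n^{1-\delta}$ for some $\delta = \delta(\varepsilon)>0$, such that $V(T) \setminus V(T')$ consists of leaves of $T$, each attached to a vertex of a well-spread set $P \subseteq V(T')$ (every $p \in P$ hosts only few of the removed leaves). Such a decomposition is obtained by iteratively peeling $o(n)$-sized subtrees from $T$ and using the bounded-degree hypothesis to control the cumulative spread of $P$.

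Maker's strategy then runs in two interleaved stages. In the \emph{embedding stage}, Maker fixes a vertex set $U \subseteq V(K_n)$ of size $|V(T')|$ and embeds $T'$ into $K_n[U]$ by maintaining a subgraph $G_M$ on $U$ which at every step has strong vertex expansion (every small subset has many external neighbors in $G_M$) and embedding $T'$ vertex-by-vertex \emph{online}, so that each Maker move simultaneously extends the partial embedding and adds an edge to $G_M$. This is in the spirit of the biased Hamiltonicity strategy of~\cite{K}: against bias $b \le n^{\alpha}$ with small $\alpha$, expansion can be built and maintained while using only $|V(T')|-1+o(n)$ Maker moves in total. In parallel Maker reserves, for every $p \in P$, a handful of candidate edges from $p$ into $V(K_n) \setminus U$; since $|P| \le m = o(n)$ this costs only $o(n)$ additional moves. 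In the \emph{attachment stage} Maker plays on the reserved candidate edges a biased Maker-Breaker perfect matching game between $V(K_n) \setminus U$ and the attachment slots on $P$: provided each $p$ kept sufficiently many free candidates, Hall's condition is preserved throughout Breaker's play and the matching is completed in $m + o(n)$ moves. Summing across stages, Maker uses $n + o(n)$ moves, matching the trivial lower bound $n-1$ up to lower-order terms.

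The main obstacle is achieving genuine optimality in the core embedding stage: standard bounded-degree tree-embedding strategies naturally deliver something like $(1+o(1))|V(T')|$ moves, with a linear-size slack, but here I need $|V(T')|-1+o(n)$. This forces a very tight online embedding in which every Maker move either extends the partial embedding of $T'$ or contributes directly to the expansion needed for the next embedding step, with essentially no edges ``wasted''. Balancing this against a bias of up to $n^{\alpha}$ will require a careful potential argument controlling expansion and Breaker's accumulated damage simultaneously --- a natural device would be a box-style auxiliary game showing that Breaker cannot simultaneously block all candidate extensions at any frontier vertex while Maker preserves the global expansion budget. A secondary but nontrivial point is coordinating the reservation of the attachment edges into $V(K_n) \setminus U$ with the main embedding so that neither the reservation nor the matching inflates the move count beyond $o(n)$.
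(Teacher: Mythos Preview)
Your decomposition lemma, as stated, is false, and this is the central gap. You claim that any $T$ with $\Delta(T)\le n^{\varepsilon}$ admits a subtree $T'$ on $n-m$ vertices, $m=n^{1-\delta}$, such that $V(T)\setminus V(T')$ consists of \emph{leaves of $T$} attached to a well-spread set $P$. But a Hamilton path has exactly two leaves, so you cannot peel off $n^{1-\delta}$ leaves of $T$; more generally, any tree whose leaf set has size $o(n^{1-\delta})$ breaks the lemma. ``Iteratively peeling subtrees'' does not rescue this: for a path, iterated leaf removal shortens the path, and the removed vertices are not leaves of $T$ nor are they each adjacent to $T'$, so your attachment-by-matching stage has nothing to match. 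The paper confronts exactly this dichotomy and splits into two cases. When $|N_T(L)|\ge n^{2/3}$ there are enough leaves with distinct parents, and the endgame is indeed a bipartite perfect-matching game, close in spirit to your attachment stage. When $|N_T(L)|<n^{2/3}$, the tree has few leaves, hence (by a simple degree count) few vertices of degree exceeding~$2$, hence many long bare paths; the paper then embeds the small skeleton obtained by deleting those bare-path interiors and completes by building, in parallel, Hamilton paths with prescribed endpoints on many disjoint pieces of the board, coordinated via a box-game-with-resets argument. Your proposal contains no analogue of this second mechanism.

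Even restricting to the many-leaves regime, your ``embedding stage'' is a sketch rather than a proof. Fixing $U$ in advance and simultaneously maintaining global expansion while embedding $T'$ online in $|V(T')|-1+o(n)$ moves is precisely the step you yourself flag as the main obstacle, and nothing in the proposal explains how to close it against bias $n^{\alpha}$. The paper avoids this entirely: it does not fix the host set in advance and does not build an expander. Instead it embeds $T'$ greedily, one edge per move, while monitoring \emph{dangerous} vertices (available or open vertices whose Breaker-degree has crossed a threshold $\sqrt{n}$); whenever such a vertex appears, Maker spends $O(n^{\varepsilon}+n^{\alpha})$ moves to absorb and close it, possibly via a short path of length three. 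A counting argument bounds the number of dangerous vertices by $O(n^{1/2+\alpha})$, so the total overhead is $o(n)$. This is both simpler and tighter than an expander-maintenance scheme, and it is what actually delivers the $n+o(n)$ bound.
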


\noindent The rest of this paper is organized as follows: in 
Subsection~\ref{subsec::prelim} we introduce some notation and terminology
that will be used throughout this paper.
In Section~\ref{sec::tools} we state and prove several auxiliary results
which will be used in the proof of Theorem~\ref{spanningTree}. 
In Section~\ref{sec::proofs} we prove Theorem~\ref{spanningTree}.
Finally, in Section~\ref{sec::openprob} we present some open problems.

\subsection{Notation and terminology} \label{subsec::prelim} 
\noindent For the sake of simplicity and clarity of presentation, we
do not make a par\-ti\-cu\-lar effort to optimize the constants
obtained in our proofs. We also omit floor and
ceiling signs whenever these are
not crucial. Most of our results are asymptotic in nature and
whenever necessary we assume that $n$ is sufficiently large.
Throughout the paper, $\log$ stands for the natural logarithm,
unless stated otherwise. Our
graph-theoretic notation is standard and follows that
of~\cite{West}. In particular, we use the following.

For a graph $G$, let $V(G)$ and $E(G)$ denote its sets of vertices
and edges respectively, and let $v(G) = |V(G)|$ and $e(G) = |E(G)|$.
For disjoint sets $A,B \subseteq V(G)$, let $E_G(A,B)$ denote the set of
edges of $G$ with one endpoint in $A$ and one endpoint in $B$, and
let $e_G(A,B) = |E_G(A,B)|$. For a set $S \subseteq V(G)$, let
$N_G(S) = \{u \in V(G) \setminus S: \exists v \in S, (u,v)
\in E(G)\}$ denote the set of neighbors of the vertices of $S$ in $V(G) \setminus S$. 
For a vertex $w \in V(G)$ we abbreviate $N_G(\{w\})$ to $N_G(w)$, and 
let $d_G(w) = |N_G(w)|$ denote the degree of $w$ in $G$. The maximum degree
of a graph $G$ is denoted by $\Delta(G)$. For vertices
$u,v \in V(G)$ let $dist_G(u,v)$ denote the \emph{distance} between $u$
and $v$ in $G$, that is, the number of edges in a shortest path of $G$, 
connecting $u$ and $v$. Often, when there is no risk of confusion, we omit the subscript $G$
from the notation above. For a set $S \subseteq V(G)$, let $G[S]$ denote the 
subgraph of $G$, induced on the vertices of $S$. Let $P = (v_0, \ldots, v_k)$
be a path in a graph $G$. The vertices $v_0$ and $v_k$ are called the \emph{endpoints}
of $P$, whereas the vertices of $V(P) \setminus \{v_0, v_k\}$ are called the
\emph{interior vertices} of $P$. We denote the set of endpoints of a path $P$ by
$End(P)$. A path of a tree $T$ is called \emph{a bare path}
if all of its interior vertices are of degree 2 in $T$.
Given two graphs $G$ and $H$ on the same set of vertices $V$, 
let $G \setminus H$ denote the graph with vertex set $V$ and edge set 
$E(G)\setminus E(H)$. A graph $G=(V,E)$ is said to be \emph{Hamilton connected}
if, for every two vertices $u,w \in V$, there is a Hamilton path in $G$ whose
endpoints are $u$ and $w$. A \emph{triangle factor} of a graph $G$ is a spanning
$2$-regular subgraph of $G$, every connected component of which is isomorphic to $K_3$.  

Let $G$ be a graph, let $T$ be a tree, and let
$S \subseteq V(T)$ be an arbitrary set. An \emph{$S$-partial
embedding} of $T$ in $G$ is an injective mapping $f : S \rightarrow
V(G)$, such that $(f(x),f(y)) \in E(G)$ whenever $\{x,y\} \subseteq
S$ and $(x,y) \in E(T)$. For every vertex $v\in f(S)$ we denote
$v'=f^{-1}(v)$. If $S = V(T)$, we call an $S$-partial embedding of
$T$ in $G$ simply an embedding of $T$ in $G$. We say that the
vertices of $S$ are \emph{embedded}, whereas the vertices of $V(T)
\setminus S$ are called \emph{new}. An embedded vertex is called
\emph{closed} with respect to $T$ if all its neighbors in $T$ are embedded as well. An
embedded vertex that is not closed with respect to $T$, is called
\emph{open} with respect to $T$. The vertices of $f(S)$ are called \emph{taken}, whereas
the vertices of $V(G) \setminus f(S)$ are called \emph{available}.
With some abuse of this terminology, for a closed (respectively
open) vertex $u \in S$, we will sometimes refer to $f(u)$ as being
closed (respectively open) as well. 

Assume that some Maker-Breaker
game, played on the edge set of some graph $G$, is in progress. At
any given moment during this game, we denote the graph spanned by
Maker's edges by $M$, and the graph spanned by Breaker's edges by
$B$. At any point of the game, the edges of $G \setminus (M \cup B)$
are called \emph{free}. We also denote by $d_M(v)$ and $d_B(v)$ the degree
of a given vertex $v \in V$ in $M$ and in $B$ respectively.

\section{Auxiliary results}
\label{sec::tools} 

In this section we present some auxiliary results
that will be used in the proof of Theorem~\ref{spanningTree}. 

The following fundamental theorem, due to Beck~\cite{Beck}, is a useful
sufficient condition for Breaker's win in the $(p:q)$ game $(X,
{\mathcal F})$. It will be used extensively throughout the paper.
\begin{theorem} \label{bwin}
Let $X$ be a finite set and let ${\mathcal F} \subseteq 2^X$. If
$\sum_{B \in {\mathcal F}}(1+q)^{-|B| / p} < \frac{1}{1+q}$, then
Breaker (as the first or second player) has a winning strategy for the
$(p : q)$ game $(X, {\mathcal F})$.
\end{theorem}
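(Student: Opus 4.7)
The plan is to follow the classical Erdős--Selfridge potential-function argument, suitably adapted to the biased $(p:q)$ setting. For every winning set $B \in \mathcal{F}$, define a position-dependent weight by $w(B) = (1+q)^{-u(B)/p}$ when $B$ contains no Breaker element at the current moment (call such $B$ \emph{surviving}), and $w(B) = 0$ otherwise; here $u(B)$ denotes the number of currently unclaimed elements of $B$. Set $T = \sum_{B \in \mathcal{F}} w(B)$. At the start of the game $T$ equals $T_0 := \sum_{B \in \mathcal{F}} (1+q)^{-|B|/p}$, and if Maker ever fully occupies some winning set $B$ then $w(B) = (1+q)^0 = 1$, forcing $T \geq 1$. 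It therefore suffices to produce a Breaker strategy keeping $T < 1$ at every position.

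Breaker's strategy is the natural biased greedy one: on each of his turns he claims his $q$ elements one at a time, each time picking an unclaimed element $y$ that maximises the danger $d(y) := \sum_{\text{surviving } B \ni y} w(B)$ in the current position. Every such pick makes all surviving $B \ni y$ dead and decreases $T$ by exactly $d(y)$. Conversely, each Maker pick $x$ multiplies the weight of every surviving $B \ni x$ by $(1+q)^{1/p}$, so it increases $T$ by $d(x)\cdot((1+q)^{1/p}-1)$.

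The core estimate is a per-round bookkeeping argument. Let $\delta$ be the maximum of $d(\cdot)$ over unclaimed elements just after Breaker's $q$ picks on a given turn. By the greedy rule, every one of Breaker's picks had danger at least $\delta$, so Breaker's turn reduced $T$ by at least $q\delta$. For Maker's ensuing $p$ picks I would prove by induction that the danger of Maker's $k$-th pick is at most $\delta\cdot(1+q)^{(k-1)/p}$: each preceding Maker pick multiplies by $(1+q)^{1/p}$ the weight of every surviving set it touches, so this upper bound propagates cleanly. Summing, the total increase caused by Maker's turn is at most $((1+q)^{1/p}-1)\cdot\delta\cdot\sum_{k=1}^{p}(1+q)^{(k-1)/p}$, a geometric sum that collapses exactly to $q\delta$. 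Hence $T$ is non-increasing across any Breaker-then-Maker round. If Breaker is the first player then $T \leq T_0$ throughout; if Maker is the first player then his opening $p$ moves may enlarge $T$ by a multiplicative factor of at most $(1+q)$ before Breaker responds, so $T \leq (1+q)T_0$ from then on. Either way, the hypothesis $T_0 < 1/(1+q)$ gives $T < 1$ at every stage, and Breaker wins.

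The main delicate point is the per-turn bound on Maker's total increase: Maker's successive picks within a single turn inflate the dangers of the other elements, so one cannot simply multiply the pre-turn maximum danger by $p$. Getting the geometric sum of bounds $\delta\cdot(1+q)^{(k-1)/p}$ to telescope against the factor $(1+q)^{1/p}-1$ and produce \emph{exactly} $q\delta$ is the crux of the calculation, and is precisely what dictates the exponent $|B|/p$ (rather than, say, $|B|$) in the definition of the weight function.
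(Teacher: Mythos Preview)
The paper does not prove this theorem; it quotes it as a known result of Beck and gives no argument of its own. Your proposal is a correct and complete rendition of the standard Erd\H{o}s--Selfridge/Beck potential-function proof: the greedy Breaker strategy, the bound $d(y_j) \geq \delta$ for each of Breaker's picks, the inductive bound $d(x_k) \leq \delta(1+q)^{(k-1)/p}$ for Maker's picks, and the telescoping geometric sum giving exactly $q\delta$ are all right, as is the handling of the case when Maker moves first via the multiplicative blow-up by at most $(1+q)$. This is precisely the argument in Beck's original paper, so there is nothing to compare against here beyond noting that you have supplied what the authors chose to cite.
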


While Theorem~\ref{bwin} is useful in proving that Breaker wins a certain game,
it does not show that he wins this game quickly. The following lemma is 
helpful in this respect.

\begin{lemma} [Trick of fake moves] \label{lem::fakeMoves}
Let $X$ be a finite set and let ${\mathcal F} \subseteq 2^X$. Let $q' < q$ be 
positive integers. If Maker has a winning strategy for the $(1 : q)$ game 
$(X, {\mathcal F})$, then he has a strategy to win the $(1 : q')$ game 
$(X, {\mathcal F})$ within $1 + |X|/(q+1)$ moves.
\end{lemma}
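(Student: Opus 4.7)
The approach is a standard simulation argument: Maker plays the real $(1:q')$ game while maintaining in his head a parallel $(1:q)$ game on the same board $(X,\mathcal{F})$, in which he follows a fixed winning strategy $\mathcal{S}$ guaranteed by the hypothesis. The real and simulated games will have identical Maker's positions, while real Breaker's moves will always be a (proper) subset of simulated Breaker's moves.

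\textbf{The simulation.} I would describe Maker's strategy round by round. At the start of each round, after real Breaker claims some $q'$ currently-free elements of $X$, Maker arbitrarily selects $q-q'$ further elements of $X$ that are free in the real game and adjoins them to simulated Breaker's move (these are the ``fake moves''). This produces a legal position of the simulated $(1:q)$ game. Maker then queries $\mathcal{S}$ to obtain an element $x\in X$ that is free in the simulated position, and claims $x$ in the real game, recording it simultaneously as his move in the simulation.

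\textbf{Consistency and correctness.} At every point in time, the set of elements claimed in the simulation equals the set of elements claimed in reality together with all fake moves played so far; in particular, every simulation-free element is also real-free, so Maker's choice $x$ is always a legal move in the real game. Since Maker's moves are identical in both games and $\mathcal{S}$ is a winning strategy for the $(1:q)$ game, Maker eventually completes a winning set in the simulation, hence also in the real game.

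\textbf{Move count and obstacle.} Each round of the simulated game removes exactly $1+q$ elements from the board of size $|X|$, so Maker's total number of moves in the simulation, and therefore in the real game, is at most $\lceil |X|/(1+q)\rceil\le 1+|X|/(q+1)$, as required. The only subtle point is ensuring that the $q-q'$ fake elements can always be chosen, i.e.\ that after real Breaker's move there are still at least $q-q'$ real-free elements left; but whenever the simulation is about to continue to a new round, Maker has not yet won and the simulated game admits a legal $q$-move of Breaker followed by a Maker response, which forces at least $q$ free simulated elements to be available before simulated Breaker moves, and hence at least $q-q'$ free real elements after the real Breaker move. Thus the fake moves can always be carried out, and the argument is complete.
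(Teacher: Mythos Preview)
Your argument is correct and is exactly the standard ``fake moves'' simulation the paper has in mind; the paper itself only gives a one-line sketch and defers the details to Beck's book, so you have in fact written out more than the paper does. One small imprecision worth tightening: the fake elements should be chosen to be free in the \emph{simulated} game, not merely in the real game, since real Breaker may later claim an element that was a previous fake move (in which case that element is already owned by simulated Breaker and you must substitute a fresh simulated-free element to complete simulated Breaker's $q$-move); with that adjustment your counting and your final ``obstacle'' paragraph go through verbatim.
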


The main idea of the proof of Lemma~\ref{lem::fakeMoves} is that, in every move
of the $(1 : q')$ game $(X, {\mathcal F})$, Maker (in his mind) gives Breaker
$q - q'$ additional board elements. The straightforward details can be found in~\cite{BeckBook}. 

Let $T=(V,E)$ be an arbitrary tree on $n$ vertices. For every $1 \leq i
\leq n-1$ let $D_i := \{v \in V : d_T(v)=i\}$ denote the set of vertices
of $V$ whose degree in $T$ is exactly $i$. Moreover, let
$D_{>i} := \bigcup_{k=i+1}^{n-1} D_k$ denote the set of vertices
of $V$ whose degree in $T$ is strictly larger than $i$.  

\begin{lemma} \label{leaves}
Let $T$ be a tree on $n \geq 2$ vertices, then $|D_{>2}| \leq |D_1| - 2$.
\end{lemma}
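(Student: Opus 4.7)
The plan is to use the handshake lemma together with the identity that a tree on $n$ vertices has exactly $n-1$ edges, and then separate vertices by their degree.

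First I would write $n = |D_1| + |D_2| + |D_{>2}|$ (note that in a tree on $n \geq 2$ vertices there are no isolated vertices, so every vertex has degree at least $1$). Next, using $\sum_{v \in V} d_T(v) = 2(n-1)$ and splitting the sum according to the partition $V = D_1 \cup D_2 \cup \bigcup_{i \geq 3} D_i$, I get
\[
|D_1| + 2|D_2| + \sum_{i \geq 3} i\, |D_i| \;=\; 2(|D_1| + |D_2| + |D_{>2}|) - 2.
\]
Cancelling the $|D_2|$ terms and rearranging yields
\[
\sum_{i \geq 3} i\, |D_i| \;=\; |D_1| + 2|D_{>2}| - 2.
\]

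Finally, since every vertex contributing to the left-hand side has degree at least $3$, we have $\sum_{i \geq 3} i\,|D_i| \geq 3|D_{>2}|$. Combining this with the displayed equation gives $3|D_{>2}| \leq |D_1| + 2|D_{>2}| - 2$, i.e., $|D_{>2}| \leq |D_1| - 2$, as required. There is no real obstacle here; the only thing to double-check is the edge case $n = 2$, where $T$ is a single edge, $|D_1| = 2$, $|D_{>2}| = 0$, and the inequality holds with equality.
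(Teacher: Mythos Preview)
Your proof is correct and follows essentially the same approach as the paper: both use the handshake identity $\sum_v d_T(v) = 2n-2$, partition the vertices into $D_1$, $D_2$, and $D_{>2}$, and bound the contribution of $D_{>2}$ from below by $3|D_{>2}|$. The only cosmetic difference is that you first derive the exact equation $\sum_{i\geq 3} i|D_i| = |D_1| + 2|D_{>2}| - 2$ before applying the inequality, whereas the paper passes to the inequality in one step.
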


\begin{proof}
\begin{eqnarray*}
2n-2 &=& \sum_{v \in V} d(v)\\
&=& \sum_{v \in D_1} d(v) + \sum_{v \in D_2} d(v) + \sum_{v \in D_{>2}} d(v)\\
&\geq& |D_1| + 2(n - |D_1| - |D_{>2}|) + 3|D_{>2}|.
\end{eqnarray*}

It follows that $2n-2 \geq 2n - |D_1| + |D_{>2}|$, and thus 
$|D_{>2}| \leq |D_1| - 2$ as claimed.
\end{proof}

\begin{lemma} \label{randomPartition}
Let $k$ be a sufficiently large positive integer and let $G=(V,E)$ be a 
graph on $k$ vertices with maximum degree at most $k^{0.95}$.
Let $\ell \geq 1$ be an integer and let $L := \{a_1, \ldots,
a_{\ell}, b_1, \ldots, b_{\ell}\}$ be a set of $2\ell$ vertices of
$G$. For every $1 \leq i \leq \ell$, let $k_i$ be an integer such
that $\sum_{i=1}^{\ell} k_i = k - 2\ell$, and $k_i\geq k^{0.2} $.
Then, there exists a partition $V \setminus L = V_1 \cup \ldots \cup
V_{\ell}$ such that the following two properties hold for every $1
\leq i \leq \ell$:
\begin{description}
\item [$(i)$] $|V_i| = k_i$.
\item [$(ii)$] The maximum degree of the graph $G_i := G[V_i \cup \{a_i, b_i\}]$ is at most $10 k_i k^{-0.05}$.
\end{description}
\end{lemma}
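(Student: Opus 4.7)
The plan is to prove the lemma by a random partitioning argument: sample a uniformly random permutation of $V \setminus L$ and let $V_i$ be the $i$-th consecutive block of length $k_i$. Property $(i)$ then holds deterministically, so the entire task reduces to showing that property $(ii)$ also holds with positive probability.

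For the probabilistic estimate I would first record the bookkeeping: since every $k_i \geq k^{0.2}$ and $\sum_i k_i = k - 2\ell$, we have $\ell \leq k^{0.8}$, hence $k - 2\ell \geq k/2$ for large $k$. By symmetry, for every $u \in V \setminus L$ and every $i \in [\ell]$, $\Pr[u \in V_i] = k_i / (k - 2\ell)$. Hence for every $v \in V(G)$ and every $i$,
\[
\mathbb{E}\bigl[\,|N_G(v) \cap V_i|\,\bigr] \;\leq\; \Delta(G) \cdot \frac{k_i}{k - 2\ell} \;\leq\; k^{0.95} \cdot \frac{k_i}{k/2} \;=\; 2 k_i k^{-0.05}.
\]
Call this mean $\mu_{v,i}$ and set $T := 10 k_i k^{-0.05} - 2$, noting that $T \geq 4 \mu_{v,i}$ and $T \geq 8 k^{0.15}$.

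The key input is a Chernoff-type tail bound for $|N_G(v) \cap V_i|$. The variable is hypergeometric, so Hoeffding's classical extension of the Chernoff bound to sampling without replacement applies (equivalently, the indicators $\mathbf{1}[u \in V_i]$ for $u \in V \setminus L$ are negatively associated, so Chernoff bounds apply to their sum). Using $\Pr[X \geq T] \leq (e\mu/T)^T$, the tail is at most $(e/4)^T \leq e^{-T/2} \leq e^{-4 k^{0.15}}$. A union bound over the at most $k \cdot \ell \leq k^2$ pairs $(v,i)$ shows that with probability $1 - o(1)$, every $v \in V(G)$ and every $i \in [\ell]$ satisfy $|N_G(v) \cap V_i| \leq T = 10 k_i k^{-0.05} - 2$, and in particular, for every $u \in V_i \cup \{a_i, b_i\}$,
\[
d_{G_i}(u) \;\leq\; |N_G(u) \cap V_i| + |\{a_i, b_i\}| \;\leq\; 10 k_i k^{-0.05},
\]
verifying property $(ii)$.

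The only genuine obstacle is justifying Chernoff concentration under the fixed-block-size sampling, but this is classical and can be cited as a black box. The hypotheses $k_i \geq k^{0.2}$ and $\Delta(G) \leq k^{0.95}$ are each used exactly once — the former to make $T$ a growing power of $k$ so that the tail is superpolynomially small, the latter so that the target $10 k_i k^{-0.05}$ comfortably dominates the mean $\mu_{v,i}$.
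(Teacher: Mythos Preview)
Your argument is correct and follows essentially the same route as the paper: choose a uniformly random ordered partition of $V\setminus L$ with the prescribed part sizes, bound each $|N_G(v)\cap V_i|$ via a hypergeometric tail inequality (the paper cites the same estimates from Janson--\L uczak--Ruci\'nski), and take a union bound over all pairs $(v,i)$. One cosmetic slip: from $T\ge 4\mu$ you get $(e\mu/T)^T\le (e/4)^T=e^{-(\ln 4-1)T}\approx e^{-0.386T}$, not $e^{-T/2}$; this is harmless since $e^{-cT}$ with any positive constant $c$ and $T\ge 8k^{0.15}$ already swamps the $k^2$ union bound.
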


\begin{proof}
Let $V \setminus L = V_1 \cup \ldots \cup V_{\ell}$ be a partition,
chosen uniformly at random amongst all partitions of $V \setminus L$
into $\ell$ parts such that $|V_i| = k_i$ for every $1 \leq i \leq
\ell$. Fix some $1 \leq i \leq \ell$, and set $m_i := 10 k_i k^{-0.05}$.
Let $u \in V$ be an arbitrary vertex. The probability that $u$ 
has more than $m_i$ neighbors in $V_i$ is at most $e^{-m_i}$ 
(see Theorem 2.10 and Corollary 2.4 in~\cite{JLR}). 
It follows by a union bound argument, that the probability that
there exists a vertex $u \in V$ such that $d_{G}(u) \geq m_i$ is at
most $k/e^{m_i}$. It thus follows by another union bound argument
that the probability that there exists an index $1 \leq i \leq \ell$
such that $G_i$ does not satisfy property $(ii)$ above, is at most
$\sum_{i=1}^{\ell} \frac{k}{e^{m_i}} \leq
\frac{k \ell}{e^{k^{0.15}}} = o(1)$. In particular, there exists a
partition that satisfies both properties of the lemma.
\end{proof}

\subsection{Playing several biased games in parallel}

Let $m$ be a positive integer. For every $1 \leq i \leq m$, 
let ${\mathcal H}_i = (V_i, E_i)$ be a hypergraph, 
where $V_i \cap V_j = \emptyset$ for every $1 \leq i < j
\leq m$. Let ${\mathcal H} = (V,E)$ be the hypergraph with $V =
\bigcup_{i=1}^m V_i$ and $E = \prod_{i=1}^m E_i=\{\bigcup_{i=1}^m e_i :
e_i \in E_i\}$. Consider a $(1:q)$ Maker-Breaker game played on
${\mathcal H}$. If $q=1$, then Maker can play all $m$ games in
parallel, that is, whenever Breaker claims a vertex of the board 
${\mathcal H}_i$, Maker responds by also claiming a vertex of ${\mathcal H}_i$ according to a
fixed winning strategy for the game ${\mathcal H}_i$ (if Breaker 
claims the last vertex of ${\mathcal H}_i$, then Maker responds by claiming
an arbitrary free vertex of ${\mathcal H}$). It follows that Maker 
wins the $(1 : 1)$ game ${\mathcal H}$ if
and only if he wins the $(1 : 1)$ game ${\mathcal H}_i$ for every
$1 \leq i \leq m$. If $q > 1$, then this is no 
longer true because Breaker can play in $q$ different boards in one turn whereas 
Maker can only respond in one board per turn.
Nevertheless, we prove the following result.
\begin{theorem} \label{ParallelGames} 
If, for every $1 \leq i \leq m$, Maker has a strategy to win the\\ $\left(1 : q \left(1 + \log \left(m + \left\lceil \frac{\sum_{i=1}^m |V_i|}{q+1} \right\rceil \right) \right) \right)$
game ${\mathcal H}_i$ in $t_i$ moves, then he has a strategy to win the $(1 : q)$ game ${\mathcal H}$ in $\sum_{i=1}^m t_i$ moves.
\end{theorem}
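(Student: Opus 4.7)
The plan is to have Maker simulate, in parallel, the given winning strategies for the $m$ individual $(1:Q)$-games, where
\[
Q := q\Bigl(1 + \log\bigl(m + \bigl\lceil \tfrac{\sum_i |V_i|}{q+1}\bigr\rceil\bigr)\Bigr).
\]
Let $\sigma_i$ denote the promised $(1:Q)$-winning strategy for $\mathcal{H}_i$, which wins in $t_i$ Maker moves. For every board $i$ let $B_i$ and $M_i$ count Breaker's and Maker's real moves in $V_i$ so far, and set the \emph{deficit} $d_i := B_i - Q M_i$.

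Maker's strategy on $\mathcal{H}$: in each round of the $(1:q)$-game, after Breaker's $q$ moves, Maker picks $i^* \in \arg\max_{i :\ M_i < t_i} d_i$ and plays the next move prescribed by $\sigma_{i^*}$ on $\mathcal{H}_{i^*}$. If fewer than $Q$ real Breaker moves have accumulated in $V_{i^*}$ since Maker's previous move in that board, Maker completes the batch by imagining fake Breaker moves on unclaimed vertices of $V_{i^*}$, as justified by Lemma~\ref{lem::fakeMoves}. Once Maker has made $t_i$ real moves in board $i$, he considers $\mathcal{H}_i$ finished and excludes it from the $\arg\max$.

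The central claim is the invariant $\max_i d_i \leq Q$ throughout the game. Granted this, every invocation of $\sigma_{i^*}$ sees a legal $(1:Q)$-history (after padding with fake Breaker moves), so $\sigma_i$ still wins $\mathcal{H}_i$ within $t_i$ real Maker moves. Since Maker plays one move per round, the total number of rounds is $\sum_i t_i$, as desired. To prove the invariant I would use a potential-function argument, e.g.\ $\Phi := \sum_{i=1}^m (1+1/q)^{d_i}$. Breaker's $q$ moves per round multiply the affected summands by a total factor of at most $(1+1/q)^q \leq e$ (the worst case being concentration on a single board), while Maker's move multiplies the largest summand by $(1+1/q)^{-Q}$. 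Since that largest summand is at least $\Phi/m$, a careful inductive estimate shows that $\Phi$ stays bounded by roughly $m \cdot (1+1/q)^Q$ throughout, and hence $\max_i d_i \leq Q$.

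The main obstacle is getting the potential estimate tight enough to produce the precise logarithmic form of $Q$. Breaker's option to concentrate all $q$ moves on one board is the adversarial worst case, and trading off that spike against Maker's single-summand decrement $(1+1/q)^{-Q}$ requires a delicate convexity argument. The two summands inside the logarithm, namely $m$ and $\lceil\sum_i |V_i|/(q+1)\rceil$, arise respectively from bounding the number of parallel queues (i.e.\ the $\Phi/m$ lower bound on the $\arg\max$ term) and from bounding the time horizon over which the potential must remain controlled.
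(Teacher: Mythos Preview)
Your approach is in the same spirit as the paper's: an exponential potential function over the $m$ boards, with Maker always attending to the currently worst board, and padding with fake Breaker moves to simulate the $(1:Q)$ strategies $\sigma_i$. The paper packages this via an auxiliary \emph{box game with resets} $rBox(m,q)$: BoxMaker ($=$ Breaker) distributes $q$ elements among $m$ boxes each round, and BoxBreaker ($=$ Maker) resets one box to weight $0$. Passing to a continuous version where BoxMaker distributes total weight $1$ per round, the potential is $\Phi = \sum_i e^{w_i}$, and the key calculation is that resetting the heaviest box makes $\Phi$ increase by at most $1$ per round; starting from $\Phi = m$ this gives $\Phi \le m+k$ after $k$ rounds, hence every box weight stays below $1+\log(m+k)$, which scales to $q(1+\log(m+k))=Q$ in the discrete game.

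The one substantive difference, and the source of the difficulty you yourself flag, is your choice of state variable. You track the cumulative deficit $d_i = B_i - QM_i$, so Maker's move subtracts $Q$ from $d_{i^*}$; the paper instead tracks $w_i$, the number of Breaker moves in $V_i$ since Maker last played there, so Maker's move \emph{resets} $w_{i^*}$ to $0$. With resets, the heaviest term of $\Phi$ becomes exactly $e^0=1$ after Maker's move, yielding the clean $\Phi'' - \Phi \le 1$ and hence the precise constant $1+\log(m+k)$. With your decrement-by-$Q$ formulation the corresponding term becomes $(1+1/q)^{d_{i^*}-Q}$, and bounding this by $1$ is exactly the invariant you are trying to prove, so the inductive step becomes circular; and your fallback estimate $\Phi \lesssim m\,(1+1/q)^Q$ only yields $d_i \le Q + \Theta(q\log m)$, which is off by the very logarithmic term you need. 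Switching the tracked quantity from $d_i$ to $w_i$ removes this obstacle entirely. A related point: what the simulation of $\sigma_i$ actually needs is the \emph{increment} bound (at most $Q$ Breaker moves in $V_i$ between consecutive Maker moves there), not merely the cumulative bound $B_i\le (M_i+1)Q$; the reset formulation gives precisely the increment bound.
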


Before proving Theorem~\ref{ParallelGames}, we introduce an
auxiliary game, which is a variation on the classical \emph{Box Game},
first introduced by Chv\'atal and Erd\H{o}s~\cite{CE} (see~\cite{HKSSbox} for
a recent development). The {\em Box Game with resets $rBox(m,q)$} is played by two
players, called BoxMaker and BoxBreaker. They play on a hypergraph
${\mathcal H} = \{A_1, \ldots, A_m\}$, where the sets $A_i$ are pairwise disjoint.
BoxMaker claims $q$ elements of $\bigcup_{i=1}^m A_i$ per turn, and then BoxBreaker 
responds by {\em resetting} one of BoxMaker's \emph{boxes}, that is, by deleting all of BoxMaker's elements from the chosen hyperedge $A_i$. Note that the chosen box
does {\em not} leave the game. At every point during the game, 
and for every $1 \leq i \leq m$, we define \emph{the weight} of box 
$A_i$ to be the number of BoxMaker's elements that are currently in $A_i$,
that is, the number of elements of $A_i$ that were claimed by BoxMaker and 
were not yet deleted by BoxBreaker.

\begin{theorem} \label{box1} 
For every integer $k \geq 1$, BoxBreaker has a strategy for the game $rBox(m,q)$ 
which ensures that, at any point during the first $k$ rounds of the game, 
every box $A_i$ has weight at most $q(1 + \log(m+k))$.
\end{theorem}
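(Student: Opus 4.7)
The plan is to have BoxBreaker play the natural greedy strategy: after each of BoxMaker's moves, reset the box that currently has the largest weight. The analysis will track the exponential potential
$$\Phi_t := \sum_{i=1}^m e^{w_i^{(t)}/q},$$
where $w_i^{(t)}$ denotes the weight of box $A_i$ at the end of round $t$; note that $\Phi_0 = m$. The central claim I would prove is the one-step bound $\Phi_{t+1} \leq \Phi_t + 1$, which by induction gives $\Phi_t \leq m + t$ throughout the game. Once this is established, taking a logarithm yields $w_i^{(t)} \leq q\log(m+t)$ after every round $t \leq k$, and since a single move of BoxMaker can raise any individual box's weight by at most $q$, at any intermediate point during the first $k$ rounds every weight is at most $q(1 + \log(m+k))$.

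To prove the one-step bound, write $u_i := w_i^{(t)} + a_i$ for the weights immediately after BoxMaker plays in round $t+1$, where $a_i \geq 0$ and $\sum_{i=1}^m a_i = q$. Since BoxBreaker then resets the box with the largest $u_i$, one has $\Phi_{t+1} = \sum_i e^{u_i/q} - \max_i e^{u_i/q} + 1$, so the claim reduces to the inequality
$$\sum_{i=1}^m e^{w_i^{(t)}/q}\,(e^{a_i/q} - 1) \;\leq\; \max_i e^{u_i/q}.$$
I would derive this from the elementary inequality $e^y - 1 \leq y\, e^y$ (valid for $y \geq 0$): substituting $y = a_i/q$ bounds the left-hand side by $\sum_i (a_i/q)\, e^{u_i/q}$, which is a convex combination of the numbers $e^{u_i/q}$ (since the weights $a_i/q$ are nonnegative and sum to $1$) and therefore does not exceed their maximum.

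The main obstacle is choosing the right base in the exponential potential. The more standard choice $\Phi_t = \sum_i 2^{w_i^{(t)}/q}$, combined with the linear secant bound $2^y - 1 \leq y$ on $[0,1]$, also yields $\Phi_{t+1} \leq \Phi_t + 1$ by the same type of argument, but it only produces the looser estimate $q\log_2(m+k)$, which exceeds $q(1 + \log(m+k))$ by a factor of roughly $1/\log 2$. Working with base $e$ forces one to replace the secant bound by the sharper tangent-type inequality $e^y - 1 \leq y e^y$; this is precisely what converts the base-$2$ logarithm into a natural logarithm and delivers the bound stated in the theorem.
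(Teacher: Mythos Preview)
Your proposal is correct and follows essentially the same route as the paper: the greedy reset strategy analyzed via the exponential potential $\sum_i e^{w_i/q}$, with the key inequality $e^y - 1 \le y e^y$ yielding the one-step bound $\Phi_{t+1} \le \Phi_t + 1$. The only cosmetic difference is that the paper first passes to a continuous, unit-weight version of the game (proving the bound $1+\log(m+k)$ there) and then scales by $q$, whereas you build the scaling $1/q$ directly into the potential; the computations are otherwise line-for-line the same.
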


In order to prove Theorem~\ref{box1}, we introduce a continuous
version of the game $rBox(m,q)$ which we denote by $rCBox(m)$. The board of this game
consists of $m$ boxes $A_1, \ldots, A_m$, of initial weight zero each. 
In each of his moves, CBoxMaker distributes a total weight of 1 among the 
boxes $A_1, \ldots, A_m$ as he pleases, that is, if, for $1 \leq i \leq m$, the current 
weight of $A_i$ is $w_i$, then he changes this weight to $w_i' := w_i + \delta_i$,
where $(\delta_1, \ldots, \delta_m) \in \mathbb{R}^{+}$ is any 
vector such that $\sum_{i=1}^m \delta_i = 1$. CBoxBreaker then resets a box of his choice.
Note that if CBoxBreaker, in the $rCBox(m)$ game, has a strategy to ensure that during the first $k$ rounds of the game every box has weight at most $f(k)$, then BoxBreaker clearly has a strategy in the $rBox(m,q)$ game to ensure a weight of at most $q \cdot f(k)$ in each box. Indeed, whenever BoxMaker claims $q_i \leq q$ elements of a box $A_i$, BoxBreaker
responds according to CBoxBreaker's strategy in $rCBox(m)$, as if BoxMaker has distributed a weight of $q_i/q$ in the box $A_i$. We conclude that Theorem~\ref{box1} is an immediate consequence of the following theorem.

\begin{theorem} \label{box2} For every integer $k \geq 1$, CBoxBreaker has a strategy
to ensure that during the first $k$ rounds of the game $rCBox(m)$
every box $A_i$ has weight at most $1 + \log(m+k)$.
\end{theorem}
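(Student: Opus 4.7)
The plan is to give CBoxBreaker the greedy strategy of resetting, at each round, the box with the largest weight after CBoxMaker's move, and to analyze it through the exponential potential $\Phi_t := \sum_{i=1}^m e^{w_i^{(t)}}$, where $w_i^{(t)}$ is the weight of $A_i$ at the end of round $t$. Initially $\Phi_0 = m$. The target is to show $\Phi_t - \Phi_{t-1} \le 1$ in every round; this immediately gives $\Phi_t \le m+t$, and since $e^{\max_i w_i^{(t)}} \le \Phi_t$, we obtain $\max_i w_i^{(t)} \le \log(m+t) \le 1 + \log(m+k)$ throughout the first $k$ rounds, which is in fact slightly stronger than the claim.

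Fix a round $t$, let $(\delta_1,\dots,\delta_m)$ with $\delta_i \ge 0$ and $\sum_i \delta_i = 1$ be CBoxMaker's distribution, and let $j^*$ be an index that maximizes $w_i + \delta_i$ (the index CBoxBreaker resets). Splitting off the $i=j^*$ summand from the CBoxMaker contribution and combining it with the reset term yields
\[
\Delta \Phi \;:=\; \Phi_t - \Phi_{t-1} \;=\; 1 - e^{w_{j^*}} + \sum_{i \neq j^*} e^{w_i}\bigl(e^{\delta_i} - 1\bigr),
\]
so the whole argument reduces to bounding the rightmost sum by $e^{w_{j^*}}$.

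The key move is to exploit the maximality of $j^*$: from $w_i + \delta_i \le w_{j^*} + \delta_{j^*}$ one gets $e^{w_i} \le e^{w_{j^*}+\delta_{j^*}-\delta_i}$ for every $i \neq j^*$, hence
\[
\sum_{i \neq j^*} e^{w_i}(e^{\delta_i}-1) \;\le\; e^{w_{j^*}+\delta_{j^*}} \sum_{i \neq j^*}\bigl(1 - e^{-\delta_i}\bigr) \;\le\; e^{w_{j^*}+\delta_{j^*}}(1 - \delta_{j^*}),
\]
using the elementary $1 - e^{-x} \le x$. Substituting back gives $\Delta \Phi \le 1 + e^{w_{j^*}}\bigl[e^{\delta_{j^*}}(1-\delta_{j^*}) - 1\bigr]$, and the one-variable inequality $e^{\delta}(1-\delta) \le 1$ on $[0,1]$---immediate because the function equals $1$ at $\delta=0$ and has derivative $-\delta e^{\delta} \le 0$---makes the bracket non-positive, finishing $\Delta \Phi \le 1$.

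The main obstacle is noticing this cancellation. A direct chord bound $e^{\delta_i}-1 \le (e-1)\delta_i$ plugged into $\sum_i$ yields only $\Delta \Phi \le (e-2)\max_i e^{w_i} + 1$, which is useless since $e-2 > 0$. What rescues the natural-logarithm rate is factoring out $e^{w_{j^*}+\delta_{j^*}}$ before summing, using the defining property of $j^*$; this aligns the CBoxMaker contribution precisely with the $-e^{w_{j^*}}$ produced by CBoxBreaker's reset and reduces the entire analysis to the univariate fact $e^\delta(1-\delta) \le 1$.
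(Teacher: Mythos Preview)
Your proof is correct and takes essentially the same approach as the paper: the greedy reset strategy analyzed via the exponential potential $\Phi=\sum_i e^{w_i}$, with the target $\Delta\Phi\le 1$ per round. Two remarks. First, the paper's algebra is a bit more streamlined: it keeps the $j^*$ term in the CBoxMaker sum and bounds the whole increment by
\[
\sum_i\bigl(e^{w_i+\delta_i}-e^{w_i}\bigr)\;\le\;\sum_i \delta_i\, e^{w_i+\delta_i}\;\le\;\max_i e^{w_i+\delta_i},
\]
using $e^\delta-1\le \delta e^\delta$ (equivalent to your $1-e^{-\delta}\le\delta$); the reset then subtracts exactly $\max_i e^{w_i+\delta_i}-1$, so the cancellation is immediate and the auxiliary one-variable inequality $e^\delta(1-\delta)\le 1$ is never needed. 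Second, your bound $\max_i w_i^{(t)}\le \log(m+t)$ refers to the \emph{end} of round $t$, while the statement asks for the bound at every moment, including right after CBoxMaker's move; you should add the one-line observation that after CBoxMaker moves in round $t$ the maximum weight is at most $1+\log(m+t-1)\le 1+\log(m+k)$, which is precisely where the ``$1+$'' in the claimed bound is used.
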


\begin{proof}
CBoxBreaker's strategy is fairly straightforward -- he always chooses to reset a
box of maximum weight, breaking ties arbitrarily. 

Define $\phi(x) = e^x$ and observe that for every $\delta > 0$ we have
\begin{equation}\label{lb1}
\phi(x + \delta) - \phi(x) \leq \delta \phi(x + \delta)\ .
\end{equation}

Indeed, since $\phi$ is differentiable, we can apply the Mean Value Theorem to conclude that $\phi(x + \delta) - \phi(x) = \delta \phi'(\theta) = \delta e^{\theta}$ for some $x < \theta < x + \delta$. By the monotonicity of $\phi(x)$ it then follows that $\delta e^{\theta} \leq \delta e^{x + \delta}$.

Now, given a vector of weights $\textbf{w} = (w_1, \ldots, w_m)$, define the potential function $\Phi(\textbf{w})$ by
$$
\Phi(\textbf{w}) = \sum_{i=1}^m \phi(w_i)\ .
$$

Suppose that just before CBoxMaker's $j$th move (where $j \geq 1$ is arbitrary),
for every $1 \leq i \leq m$, the weight of box $A_i$ is $x_i$. 
The potential $\Phi$ before CBoxMaker's $j$th move is thus
$$
\Phi=\sum_{i=1}^m \phi(x_i)=\sum_{i=1}^m e^{x_i}\ .
$$
In his $j$th move CBoxMaker updates the weight of box $A_i$ to $x_i + \delta_i$,
for every $1 \leq i \leq m$. Denote the potential after CBoxMaker's $j$th move 
by $\Phi'$. Then
$$
\Phi' = \sum_{i=1}^m \phi(x_i + \delta_i) = \sum_{i=1}^m e^{x_i + \delta_i}\ .
$$

It follows that
\begin{eqnarray}
\Phi' - \Phi &=& \sum_{i=1}^m e^{x_i + \delta_i} - \sum_{i=1}^m e^{x_i} \nonumber\\
&=& \sum_{i=1}^m \left(e^{x_i + \delta_i} -e^{x_i}\right) \nonumber\\
&\leq& \sum_{i=1}^m \delta_i e^{x_i + \delta_i} \nonumber\\
&\leq& \sum_{i=1}^m \delta_i \cdot \max_i e^{x_i + \delta_i} \nonumber\\
&=& \max_i e^{x_i + \delta_i}, \label{lb2}
\end{eqnarray}

where the first inequality above follows from~\eqref{lb1}.

In his $j$th move, CBoxBreaker resets a box $A_i$ for which
$x_i + \delta_i$ is maximal (breaking ties arbitrarily).
Hence, CBoxBreaker's $j$th move changes the contribution of $A_i$
to the potential $\Phi'$ from $e^{x_i + \delta_i}$ to $e^0 = 1$. 
Denote the potential after CBoxBreaker's $j$th move 
by $\Phi''$. Then
\begin{equation}\label{lb3}
\Phi' - \Phi'' = \exp\{\max_i (x_i + \delta_i)\} - 1 = \max_i
e^{x_i + \delta_i} - 1 \ .
\end{equation}

Combining~\eqref{lb2} and~\eqref{lb3}, we conclude that 
$\Phi'' - \Phi \leq 1$. Therefore, if CBoxBreaker
follows his strategy, then after the first $k$ rounds, the value of
the potential function $\Phi$ increases in total by at most $k$.

Since the potential before CBoxMaker's first move is $m e^0 = m$, the
potential after each of the first $k$ moves of CBoxMaker is at most $m+k$.
It follows that, for every $1 \leq j \leq k$, just before CBoxMaker's $j$th move, 
none of the boxes has weight larger than $\log(m+k)$. In his $k$th move, CBoxMaker
adds a weight of at most 1 to any given box, and thus its weight does not
exceed $1 + \log(m+k)$.
\end{proof}

\textbf{Proof of Theorem~\ref{ParallelGames}}
Let $k = \left\lceil \frac{\sum_{i=1}^m |V_i|}{q+1} \right\rceil$. For every $1 \leq i \leq m$, let ${\mathcal S}_i$ be a strategy for Maker in the $(1 : q(1 + \log(m+k)))$ game ${\mathcal H}_i$ which ensures his win in at most $t_i$ moves. Since the game ${\mathcal H}$ clearly lasts at most $k$ rounds, it follows by Theorem~\ref{box1}, that Maker (assuming the role of BoxBreaker in $rBox(m,q)$) has a strategy ${\mathcal S}$ to ensure that, for every $1 \leq i \leq m$, for every $j \geq 0$, and at any point during the game, if Maker has claimed exactly $j$ vertices of $V_i$, then Breaker has claimed at most $(j+1) q(1 + \log(m+k))$ vertices of $V_i$. For every $i \geq 1$, in his $i$th move in the game ${\mathcal H}$, Maker will use ${\mathcal S}$ for choosing a board ${\mathcal H}_j$ in which to play in this move. If Maker has already won ${\mathcal H}_j$, then he chooses an arbitrary $1 \leq r \leq m$ for which he has not yet won ${\mathcal H}_r$ and plays his $i$th move there. Since Maker chooses a board according to ${\mathcal S}$, it follows by Theorem~\ref{box1} that Breaker has not claimed more than $q(1 + \log(m+k))$ vertices of $V_j$ since Maker has last played on this board. Hence, Maker can follow ${\mathcal S}_j$ whenever he plays in ${\mathcal H}_j$ and thus win this game by assumption. Since this holds for every $1 \leq j \leq m$, it follows that Maker has a winning strategy for the $(1 : q)$ game ${\mathcal H}$. Moreover, since whenever Maker plays in ${\mathcal H}_i$ he follows ${\mathcal S}_i$ and since he never plays in ${\mathcal H}_i$ if he had already won this game, it follows that, for every $1 \leq i \leq m$, Maker plays at most $t_i$ moves in ${\mathcal H}_i$. Hence, he has a strategy to win the $(1 : q)$ game ${\mathcal H}$ within at most $\sum_{i=1}^m t_i$ moves, as claimed.  
{\hfill $\Box$ \medskip\\}

\subsection{A perfect matching game} \label{subsec::matching}
Maker's strategy for embedding a spanning tree, which we will propose in Section~\ref{sec::proofs}, will involve building a perfect matching on some part of the board. Hence, we prove the following result.

\begin{proposition} \label{matchingE}
Let $r$ be a sufficiently large integer and let $q \leq 
\frac{r}{12\log_2 r}$. Let $G$ be a spanning subgraph of $K_{r,r}$ with minimum
degree at least $r - g(r)$, where $g$ is an arbitrary function satisfying
$g(r) = o(r)$. Then, playing a $(1 : q)$ game on $E(G)$,
Maker can claim the edges of a perfect matching of $G$,
within $O(r \log r)$ moves.
\end{proposition}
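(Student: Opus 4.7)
The plan is to combine the Erd\H{o}s--Selfridge criterion (Theorem~\ref{bwin}) with the trick of fake moves (Lemma~\ref{lem::fakeMoves}). I would first show that Maker wins the $(1:q^*)$ matching game on $E(G)$ for a bias $q^* = r/(11 \log_2 r)$ that is strictly larger than any $q \leq r/(12 \log_2 r)$; then Lemma~\ref{lem::fakeMoves} instantly converts this into a $(1:q)$ win in at most $1 + |E(G)|/(q^* + 1) \leq 1 + 11 r \log_2 r = O(r \log r)$ moves, using $|E(G)| \leq r^2$.

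For the first step, I would apply Theorem~\ref{bwin} in its role-reversed form. By K\"onig's theorem, Maker's graph $M$ fails to contain a perfect matching of $G$ at the end of the game if and only if there exist $A' \subseteq A$ and $B' \subseteq B$ with $|A'| + |B'| \geq r + 1$ such that no edge of $M$ lies in $A' \times B'$; since every edge of $G$ is eventually claimed, this is equivalent to Breaker having occupied all of $E_G(A', B')$. Thus in the dual game, real Breaker (with bias $q^*$) plays the role of ``Maker'' trying to fully occupy some set from
\[
\mathcal{F} := \{E_G(A', B') : A' \subseteq A,\ B' \subseteq B,\ |A'| + |B'| \geq r + 1\},
\]
while real Maker (with bias $1$) plays ``Breaker''. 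Theorem~\ref{bwin} then guarantees that real Maker wins whenever $\sum_{F \in \mathcal{F}} 2^{-|F|/q^*} < 1/2$.

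The main technical step is verifying this sum estimate. Writing $a = |A'|$ and $b = |B'|$, the minimum-degree assumption gives
\[
|E_G(A', B')| \geq \min(a,b)\bigl( \max(a,b) - g(r) \bigr) \geq ab/2,
\]
since $\max(a,b) \geq (r+1)/2 \geq 2g(r)$ for $r$ large; with $q^* = r/(11\log_2 r)$ this yields $2^{-|F|/q^*} \leq r^{-11ab/(2r)}$. I would control the sum by splitting on $\min(a,b)$: when $\min(a,b)$ is small (WLOG $a \leq b$, so $r - b \leq a - 1$), the identity $\binom{r}{b} = \binom{r}{r-b}$ gives $\binom{r}{a}\binom{r}{b} \leq r^{2a - 1}$, which is beaten by the factor $r^{-\Theta(a)}$ and produces a geometric series in $a$; when $\min(a,b) = \Omega(r / \log r)$, the bound $ab \geq \Omega(r^2/\log r)$ forces $r^{-11ab/(2r)} \leq e^{-\Omega(r)}$, annihilating even the crude binomial estimate $\binom{r}{a}\binom{r}{b} \leq 2^{2r}$. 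The main obstacle is the intermediate regime, for which I would invoke the entropy bound $\binom{r}{k} \leq 2^{r H(k/r)}$ to keep the binomial factor under control, and then carefully check that the saving $r^{-11ab/(2r)}$ dominates the entropy factor throughout. Summation yields $\sum_{F \in \mathcal{F}} 2^{-|F|/q^*} = o(1) < 1/2$ for $r$ sufficiently large, so Maker wins the $(1:q^*)$ game and Lemma~\ref{lem::fakeMoves} completes the proof.
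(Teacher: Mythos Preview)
Your approach is essentially the same as the paper's: reduce to the dual game in which Breaker (as fake-Maker) tries to fully claim $E_G(A',B')$ for some ``bad'' pair, apply Theorem~\ref{bwin} to show he cannot, and then invoke Lemma~\ref{lem::fakeMoves} for the time bound. The paper streamlines the estimate by restricting to the \emph{minimal} bad pairs with $|A'|+|B'|=r+1$ (any larger pair contains one of these), so the double sum collapses to a single sum $\sum_{t\le r/2}\binom{r}{t}^2 2^{-tr/(3q)}\le\sum_t r^{-2t}=o(1)$ and your ``intermediate regime'' worry disappears entirely.
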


\begin{proof}
Let $A$ and $B$ denote the two partite sets of $G$.
In order to show that Maker can claim the edges of a perfect matching 
of $G$, we will prove that Maker can build a graph which satisfies
Hall's condition, that is, a graph $M$ which satisfies
$|N_M(X)| \geq |X|$ for every $X \subseteq A$ (see e.g.~\cite{West}).

We define an auxiliary game ${\mathcal M}_G$, which we refer to as the
\emph{Hall game}. It is a $(q : 1)$ game, played by two players, called
HallMaker and HallBreaker. The board of this game is $E(G)$ and the winning sets
are the edge sets of all induced subgraphs of $G$ with one partite set of
size $1 \leq t \leq r$ and the other of size $r-t+1$. It is straightforward 
to verify that if HallBreaker has a winning strategy for the $(q : 1)$ game
${\mathcal M}_G$, then, playing a $(1 : q)$ game on $E(G)$, Maker can claim 
all edges of some perfect matching of $G$. In order to prove that HallBreaker
can win the $(q : 1)$ game ${\mathcal M}_G$ for $q \leq \frac{r}{12\log_2 r}$, 
we apply Theorem~\ref{bwin}. We have   

\begin{eqnarray*}
\sum_{D \in {\mathcal M}_G} 2^{-\frac{|D|}{q}}
&\leq& \sum_{t=1}^{r} \binom{r}{t} \binom{r}{r-t+1} 2^{-\frac{t(r-t+1) - g(r) \cdot \min\{t, r-t+1\}}{q}}\\
&\leq& 2\sum_{t=1}^{r/2} \binom{r}{t} \binom{r}{t-1} 2^{-\frac{t(r-t+1) - g(r) \cdot t}{q}}\\
&\leq& 2\sum_{t=1}^{r/2} \binom{r}{t}^2 2^{-\frac{t r}{3q}}\\
&\leq& \sum_{t=1}^{r/2} \left[r^2 2^{- 4 \log_2 r} \right]^t\\ 
&\leq& \sum_{t=1}^{r/2} r^{-2t}\\
&=& o(1).
\end{eqnarray*}
It follows that Maker can indeed build the required perfect matching. Moreover, it follows by Lemma~\ref{lem::fakeMoves} that Maker can do so within at most $1 + \frac{|E(G)|}{r/(12\log_2 r)} = O(r \log r)$ moves. This concludes the proof of the proposition.
\end{proof}

\subsection{A Hamiltonicity game} \label{subsec::HamiltonCon}
Maker's strategy for embedding a spanning tree, which we will propose in Section~\ref{sec::proofs}, will involve building a Hamilton connected subgraph of some part of the board. Hence, we prove the following result.

\begin{proposition} \label{HamiltonConnected}
Let $k$ be a sufficiently large integer and let $q \leq \frac{k}{\log^2 k}$.
Let $G=(V,E)$ be a graph on $k$ vertices, with minimum degree at
least $k - g(k)$, where $g$ is an arbitrary function satisfying
$g(k) = o(k/\log k)$. Then, playing a $(1 : q)$ game on $E$, Maker
can build a Hamilton connected graph within $O(k \log^2 k)$
moves.
\end{proposition}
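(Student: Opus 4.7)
My plan is to reduce the Hamilton-connectivity game to a Pósa-type expansion game and then apply Beck's criterion in its dual form, deferring the speed-up to the fake-moves trick. Concretely, I aim to show that Maker can win the $(1:2q)$ game in which his goal is to build a spanning subgraph $M$ of $G$ satisfying: $(\text{P}_1)$ every vertex of $M$ has degree at least $12$; $(\text{P}_2)$ $|N_M(S)| \geq 2|S|$ for every $S \subseteq V(M)$ with $|S| \leq k/4$; and $(\text{P}_3)$ $e_M(A,B) \geq 1$ for every two disjoint sets $A,B \subseteq V(M)$ with $|A|, |B| \geq k/4$. A standard Pósa rotation--extension argument, applied with one endpoint of the path held fixed and rotations performed at the other, shows that any graph satisfying $(\text{P}_1)$--$(\text{P}_3)$ is Hamilton-connected.

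To prove Maker's win in the $(1:2q)$ game, I would apply Theorem~\ref{bwin} in dual form, treating Breaker as ``Maker'' with bias $2q$ and Maker as ``Breaker'' with bias $1$, and declaring as a ``winning set'' for Breaker any minimal edge set of $G$ whose full capture by him forces one of $(\text{P}_1)$--$(\text{P}_3)$ to fail. These come in three families. For $(\text{P}_1)$ one gets, for each $v$ and each $11$-subset $F$ of the $G$-edges at $v$, the set $E_G(v)\setminus F$: at most $k\cdot k^{11}$ such sets, each of size at least $k-g(k)-11$. For $(\text{P}_2)$ one gets, for each pair of disjoint sets $S,T$ with $|S|=s\leq k/4$ and $|T|=2s-1$, the set $E_G(S,V(G)\setminus(S\cup T))$: at most $k^{3s}$ such sets, each of size at least $s(k-3s+1-g(k))\geq sk/5$, using $\delta(G)\geq k-g(k)$. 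For $(\text{P}_3)$ one gets, for each disjoint pair $A,B$ of size $k/4$, the set $E_G(A,B)$: at most $\binom{k}{k/4}^2\leq 2^{2k}$ such sets, each of size at least $k^2/30$. With $2q\leq 2k/\log^2 k$ the three contributions to $\sum_B 2^{-|B|/(2q)}$ are respectively $k^{12}\cdot 2^{-\Omega(\log^2 k)}$, $\sum_{s\geq 1}(k^3\cdot 2^{-\log^2 k/10})^s$, and $2^{2k-\Omega(k\log^2 k)}$, each of which is $o(1)$; their sum is thus strictly below $1/2$, and Beck's criterion supplies the desired winning strategy.

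The final step is the move-count bound. Having established a winning strategy in the $(1:2q)$ game, I would invoke Lemma~\ref{lem::fakeMoves} with $q'=q<2q$ to obtain a winning strategy in the $(1:q)$ game that terminates within $1+|E(G)|/(2q+1) \leq 1+k^2\log^2 k/(4k) = O(k\log^2 k)$ moves, matching the proposition's bound.

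The main obstacle will be the structural implication step: verifying that $(\text{P}_1)$--$(\text{P}_3)$ imply Hamilton-connectivity of $M$. While the analogous statement for plain Hamiltonicity is classical Pósa, the version with two prescribed endpoints requires a careful variant of the rotation--extension argument in which one endpoint is fixed throughout and the expansion of the set of reachable ``other endpoints'' must be tracked; the constants in $(\text{P}_2)$ and $(\text{P}_3)$ are chosen precisely to leave enough room for this fixed-endpoint rotation. The Beck-sum estimates themselves are routine, the only slightly delicate point being that the lower bound on $|E_G(S,V(G)\setminus(S\cup T))|$ degrades as $s\to k/4$, but the hypothesis $g(k)=o(k/\log k)$ keeps this degradation well below the exponential decay coming from the bias bound.
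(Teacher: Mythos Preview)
Your approach is essentially the same framework as the paper's: reduce to a dual game whose winning sets encode the failure of an expansion-type sufficient condition for Hamilton-connectedness, verify Beck's criterion (Theorem~\ref{bwin}) against that family, and then invoke Lemma~\ref{lem::fakeMoves} for the move count. The Beck-sum estimates you sketch are correct and match the paper's computations in spirit.

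The difference lies in the structural condition. The paper does not prove its own sufficient condition; it cites Theorem~\ref{HamCon} from~\cite{HKS}, which requires expansion factor $D=\log\log k$ for sets of size up to $k/\log k$, together with an edge between any two disjoint sets of size $k/\log k$. Your proposed $(\text{P}_1)$--$(\text{P}_3)$ instead ask only for expansion factor~$2$ (the bare P\'osa constant) up to size $k/4$, plus minimum degree~$12$. That is exactly the threshold at which P\'osa's rotation--extension delivers a Hamilton \emph{cycle}, and it is not known to yield Hamilton-\emph{connectedness}: with both endpoints prescribed, the rotation argument you outline (fix $u$, rotate the other end) runs into trouble when one tries to reconnect two rotated sub-paths, since the endpoint sets live inside disjoint vertex sets whose sizes you do not control, and factor~$2$ leaves no slack to absorb the two booked endpoints. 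You correctly flag this as ``the main obstacle,'' but you do not resolve it, and the constants you chose are too tight for the argument you describe to go through as stated.

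This is easily repaired without changing your architecture: strengthen $(\text{P}_2)$ to expansion factor $\log\log k$ for sets up to $k/\log k$ and lower the threshold in $(\text{P}_3)$ to $k/\log k$, exactly as in Theorem~\ref{HamCon}; your Beck-sum bounds only improve under this change, and the structural implication is then a citation rather than an obstacle. Alternatively, raise the expansion constant in $(\text{P}_2)$ enough to make the fixed-endpoint rotation genuinely work and supply the missing argument, but that is real extra work the paper simply avoids.
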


\begin{proof}
In the proof of this proposition we will make use of the following
sufficient condition for a graph to be Hamilton connected (see~\cite{HKS}).

\begin{theorem} \label{HamCon}
Let $D(k)=\log \log k$ and let $G=(V,E)$ be a graph on $k$ vertices
satisfying the following two properties:
\begin{itemize}
\item For every $S \subseteq V$, if $|S| \leq \frac{k}
{\log k}$, then $|N_G(S)| \geq D|S|$;
\item There is an edge in $G$ between any two disjoint
subsets $A,B \subseteq V$ with $|A|,|B| \geq \frac{k}{\log k}$.
\end{itemize}
Then $G$ is Hamilton connected, for sufficiently large $k$.
\end{theorem}
Let ${\mathcal H}_1$ be the hypergraph whose vertices are the edges
of $G$ and whose set of hyperedges is $\{E_G(A,B) : A,B \subseteq V,
A \cap B = \emptyset, 1 \leq |A| \leq \frac{k}{\log k}, |B| = k -
(D+1)|A|\}$. Note that by our assumption on the minimum degree in
$G$, it follows that
\begin{eqnarray*}
e_G(A,B) \geq |A|(|B|-g(k)) \geq (1-o(1))|A|k.
\end{eqnarray*}
for every $A,B$ as above.

Let ${\mathcal H}_2$ be the hypergraph whose vertices are the edges
of $G$ and whose set of hyperedges is $\{E_G(A,B) : A,B \subseteq V,
A \cap B = \emptyset, |A| = |B| = \frac{k}{\log k}\}$. Note that by
our assumption on the minimum degree in $G$, it follows that
\begin{eqnarray*}
e_G(A,B) &\geq& \frac{k}{\log k} \left(\frac{k}{\log k } - g(k)\right)\\
&\geq& \left(\frac{k}{2\log k}\right)^2
\end{eqnarray*}
for every $A,B$ as above.

By Theorem~\ref{HamCon}, in order to prove that playing a $(1 : q)$
game Maker can build a Hamilton connected subgraph of $G$, it
suffices to prove that Breaker can win the $(q : 1)$ game ${\mathcal
H}_1 \cup {\mathcal H}_2$. This however follows from
Theorem~\ref{bwin}. Indeed, for every $1 \leq a \leq \frac{k}{\log k
}$, we have
\begin{eqnarray*}
&& \binom{k}{a} \binom{k}{k - (D+1)a} 2^{-(1-o(1))ak /q}\\
&\leq& k^a k^{(D+1)a} 2^{-(1-o(1))ak /q}\\
&\leq& \exp \left\{a(D+2)\log k - \log 2 \cdot (1-o(1))ak \cdot \frac{\log^2 k}{k}\right\}\\
&=& o(1/k).
\end{eqnarray*}
Hence,
\begin{eqnarray*}
\sum_{B \in {\mathcal H}_1} 2^{-|B| / q} \leq \sum_{a=1}^{\frac{k}{
\log k}} \binom{k}{a} \binom{k}{k - (D+1)a} 2^{-(1-o(1))ak /q} =
o(1).
\end{eqnarray*}

Similarly,
\begin{eqnarray*}
&&\sum_{B \in {\mathcal H}_2} 2^{-|B| / q}\\
&\leq& \binom{k}{\frac{k}{\log k}}^2 \exp \left\{- \log 2 \left(\frac{k}{2\log k}\right)^2 \frac{\log^2 k}{k}\right\}\\
&\leq& \exp \left\{\frac{(2 + o(1)) k\log \log k}{\log k} - \log 2 \left(\frac{k}{2 \log k}\right)^2 \frac{\log^2 k}{k} \right\}\\
&=& o(1).
\end{eqnarray*}

Hence,
\begin{eqnarray*}
\sum_{B \in {\mathcal H}_1 \cup {\mathcal H}_2} 2^{-|B| / q} =
\sum_{B \in {\mathcal H}_1} 2^{-|B| / q} + \sum_{B \in {\mathcal
H}_2} 2^{-|B| / q} = o(1).
\end{eqnarray*}

It follows that Maker can build the required Hamilton connected graph. Moreover, it follows by Lemma~\ref{lem::fakeMoves} that Maker can do so within at most $1 + \frac{|E|}{k/(\log^2 k)} = O(k \log^2 k)$ moves. This concludes the proof of the proposition.
\end{proof}

\section{Embedding a spanning tree quickly}
\label{sec::proofs}

In this section, we prove Theorem~\ref{spanningTree}. We begin by describing Maker's strategy, then prove that it is indeed a winning strategy and that Maker can follow all of its stages.

\textbf{Maker's strategy:} Maker distinguishes between two cases, according to the number of neighbors of the leaves of $T$. Throughout this section, let $L$ denote the set of leaves of $T$.

\textbf{Case I:} $|N_T(L)| \geq n^{2/3}$.

Maker's strategy for this case is divided into two stages.

\textbf{Stage 1:} Let $L' \subseteq L$ be a set of exactly $n^{2/3}$ leaves, every two of which have no common neighbor, that is, $|N_T(L')| = |L'|$.
Maker's goal in this stage is to embed a subtree $T''$ of $T$ such
that $T' := T \setminus L' \subseteq T''$ and $|V(T'')| \leq n - \frac{1}{2}n^{2/3}$, 
in at most $n+o(n)$ moves.

At any point during this stage, a vertex $v \in V(K_n)$ is called \emph{dangerous} if
$d_B(v) \geq \sqrt{n}$ and $v$ is either an available or an open vertex with respect to $T$.
Throughout this stage, Maker maintains a set $D \subseteq V(K_n)$
of dangerous vertices, a set $S \subseteq V(T)$ of embedded
vertices, and an $S$-partial embedding $f$ of $T$ in $K_n \setminus B$. 
Initially, $D = \emptyset$, $S = \{w'\}$, where $w'$ is an arbitrary vertex of $T'$, and
$f(w') = w$, where $w$ is an arbitrary vertex of $K_n$. If at some
point Maker is unable to follow the proposed strategy (including the time
limits it sets), then he forfeits the game. Moreover, if after claiming $2n$ edges,
Maker has not yet won, then he forfeits the game (as noted above, we will in fact
prove that Maker can win within $n + o(n)$ moves; however, the technical upper bound 
of $2n$ will suffice for the time being). 

For as long as $V(T') \setminus S \neq \emptyset$, Maker plays as follows:
\begin{description}
\item [$(1)$] If $D \neq \emptyset$, then Maker plays as follows. Let $v \in D$ be an arbitrary dangerous vertex. We distinguish between two subcases: 
\begin{description}
\item [$(1.1)$] $v$ is taken. Let $v'_1, \ldots, v'_r$ be the new neighbors of $v' := f^{-1}(v)$ with respect to $T$. Maker sequentially claims $r$ free edges $\{(v, v_i) : 1 \leq i \leq r\}$, where, for every $1 \leq i \leq r$, $v_i$ is an arbitrary available vertex. 
Subsequently, he updates $D,f$ and $S$ by adding $v'_i$ to $S$ and
setting $f(v'_i) = v_i$ for every $1 \leq i \leq r$, and by adding (if necessary)
new dangerous vertices to $D$. 

\item [$(1.2)$] $v \in D$ is available.  
First, Maker adds $v$ to his tree. If there
exists an open vertex $u$ such that $(u,v)$ is a free edge, then Maker
claims this edge, and then closes $v$ and updates $D,f$ and $S$ as in $(1.1)$.
If there is no such edge, that is, $v$ is connected by Breaker's edges
to every open (with respect to $T$) vertex of Maker's tree,
then Maker connects $v$ to an open vertex $u$ via a path of
length three, within at most $11 n^{\alpha}$ moves. Finally, Maker
closes $v$ and updates $D,f$ and $S$ as in $(1.1)$. 
\end{description}

\item [$(2)$] If $D = \emptyset$, then Maker claims some free
edge $(u,v)$ such that $v$ is an open vertex with respect to $T'$ and 
$u$ is an available vertex. 

\end{description}

\textbf{Stage 2:} Maker embeds the vertices of $T \setminus T''$ 
(that is, all vertices of $T$ that were not embedded in Stage 1) into
the set of available vertices, within $o(n)$ moves.

\vspace{7.5mm}

\textbf{Case II:} $|N_T(L)| < n^{2/3}$. 
It follows that $T$ has strictly less than $n^{2/3 + \varepsilon}$ leaves. 

Maker's strategy for this case is divided into two stages.

\textbf{Stage 1:} Let $F$ denote the forest which is obtained from $T$ by removing the interior vertices of all inclusion maximal bare paths whose length is at least $n^{0.2}$. Maker embeds $F$ in $K_n$ without paying any attention to Breaker's moves.  

\textbf{Stage 2:} Maker embeds the edges of $T \setminus F$, thus completing the embedding of $T$. In order to do so, he splits the rest of the board into parts of appropriate sizes and, playing on all parts in parallel, he embeds the missing pieces of $T$ in the appropriate parts.

Note that if Maker can indeed follow all parts of the proposed strategy, then he clearly
wins the $(1 : q)$ game ${\mathcal T}_n$ (though possibly not fast enough).

\subsection{Following Maker's strategy for Case I}
In this subsection we prove that Maker can indeed follow every part of
his strategy for Case I. We prove this separately for Stage 1 and for Stage 2.
First, we prove the following two lemmas.

\begin{lemma} \label{dang}
At any point during Stage 1 there are $O(n^{1/2 + \alpha}) = o(n^{0.6})$
dangerous vertices.
\end{lemma}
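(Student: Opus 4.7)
The plan is to bound the number of dangerous vertices by a simple degree-counting argument that exploits the forfeit condition built into Maker's strategy, together with the bound $b \le n^{\alpha}$ on Breaker's bias.

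First I would observe that throughout Stage~1 Maker has claimed at most $2n$ edges, since otherwise he would have forfeited by definition of the strategy. Consequently, Breaker has made at most $2n$ moves in Stage~1 as well, and each of his moves consists of claiming $b \le n^{\alpha}$ edges. Hence the total number of edges in Breaker's graph $B$ at any point during Stage~1 is at most $2n \cdot n^{\alpha} = 2 n^{1+\alpha}$.

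Next I would use the handshake lemma: summing $d_B(v)$ over all vertices $v \in V(K_n)$ gives $2|E(B)| \le 4 n^{1+\alpha}$. Since every dangerous vertex $v$ satisfies $d_B(v) \ge \sqrt{n}$, the number of dangerous vertices is at most
\[
\frac{4 n^{1+\alpha}}{\sqrt{n}} \;=\; 4 n^{1/2 + \alpha}.
\]
As $\alpha < 0.005$, we have $1/2 + \alpha < 0.6$, so this quantity is $o(n^{0.6})$, establishing the lemma.

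The argument is essentially a one-line counting estimate, so there is no real obstacle; the only point that requires attention is noting that the definition of ``dangerous'' applies only to available or open vertices, but since we are upper-bounding the total number of high-Breaker-degree vertices this restriction only helps us. The key conceptual observation is that the a~priori move-count cap of $2n$ that Maker imposes on himself is precisely what makes this crude counting argument sufficient.
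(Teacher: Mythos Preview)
Your proof is correct and essentially identical to the paper's own argument: both use the $2n$-move cap to bound $|E(B)|\le 2n^{1+\alpha}$, then divide the degree sum $2|E(B)|$ by the threshold $\sqrt{n}$ to get at most $4n^{1/2+\alpha}$ dangerous vertices. The paper's proof is just more terse, writing this as $2\cdot\frac{2n^{1+\alpha}}{\sqrt{n}}$ without explicitly naming the handshake lemma.
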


\begin{proof}
By Maker's strategy, the game lasts at most $2n$ moves. Since, moreover, every
dangerous vertex has degree at least $\sqrt{n}$ in Breaker's graph, it follows that
there can be at most $2 \cdot \frac{2 n^{1 + \alpha}}{\sqrt{n}} = O(n^{1/2 + \alpha})$ 
such vertices.
\end{proof}

\begin{lemma} \label{maintainProperties}
The following two properties hold at any point during Stage 1:
\begin{itemize}
\item [$(i)$] $n - |S| \geq \frac{1}{2} n^{2/3}$.
\item [$(ii)$] $d_B(v) = o(n^{0.6})$ holds for every $v \in V(K_n) \setminus f(S)$
and for every $v \in f(S)$ which is open with respect to $T$.
\end{itemize}
\end{lemma}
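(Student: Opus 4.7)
The plan is to prove $(ii)$ first via a priority-queue-style analysis of the dangerous vertices, and then to deduce $(i)$ by a bookkeeping argument counting the $L'$-leaves that can enter $S$.

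For $(ii)$, I note first that any vertex $v$ which never becomes dangerous trivially satisfies $d_B(v) < \sqrt n = o(n^{0.6})$, so the task reduces to bounding $d_B(v)$ for a vertex $v$ that becomes dangerous at some moment $t_0$; at that moment $d_B(v) \le \sqrt n + n^{\alpha}$. From $t_0$ onwards the set $D$ remains non-empty until $v$ is serviced (closed in $(1.1)$, or embedded and then closed in $(1.2)$), so every intervening iteration of Maker's strategy falls under branch $(1)$. The proof of Lemma~\ref{dang} actually bounds the total number of vertices ever to become dangerous during Stage 1 by $O(n^{1/2+\alpha})$; hence at most $O(n^{1/2+\alpha})$ iterations of $(1)$ can elapse before $v$ is handled. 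Each such iteration costs Maker at most $11 n^{\alpha} + \Delta(T) = O(n^{\alpha} + n^{\varepsilon})$ moves, during each of which Breaker adds at most $n^{\alpha}$ edges to $d_B(v)$. Multiplying, the increase in $d_B(v)$ during its wait is at most
\[
O\!\left(n^{1/2+\alpha}\cdot(n^{\alpha}+n^{\varepsilon})\cdot n^{\alpha}\right) = O\!\left(n^{1/2+3\alpha}+n^{1/2+2\alpha+\varepsilon}\right) = o(n^{0.6}),
\]
where the last estimate uses $\alpha<0.005$ and $\varepsilon<0.05$. Hence $d_B(v)=o(n^{0.6})$ throughout the interval during which $v$ is open or available; once serviced, $v$ becomes closed and ceases to be relevant for $(ii)$.

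For $(i)$ I decompose $S = (S\cap V(T'))\cup(S\cap L')$. The trivial bound $|S\cap V(T')|\le |V(T')|=n-n^{2/3}$ reduces the task to $|S\cap L'|=o(n^{2/3})$. A quick inspection of Maker's strategy shows that leaves of $L'$ enter $S$ only via the closure steps inside $(1.1)$ and $(1.2)$, since the vertex added in step $(2)$ is always a new $T'$-neighbor of an open vertex. Because $L'$ was chosen so that no two of its leaves share a $T$-neighbor, each closure embeds at most one leaf of $L'$; thus $|S\cap L'|$ is bounded by the total number of executions of branch $(1)$, which is $O(n^{1/2+\alpha})=o(n^{2/3})$. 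Combining, $|S|\le n-n^{2/3}+o(n^{2/3})\le n-\tfrac12 n^{2/3}$ for $n$ large, which is exactly $(i)$.

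The hard part will be the degree estimate inside $(ii)$: the three quantities --- total dangerous-vertex events, worst-case per-iteration cost in branch $(1)$, and the bias $n^{\alpha}$ --- must multiply to something comfortably below $n^{0.6}$, and this is precisely where the quantitative hypotheses $\alpha<0.005$ and $\varepsilon<0.05$ are used. Once this calibration is in place, $(i)$ is a short counting exercise and the remainder is pure bookkeeping.
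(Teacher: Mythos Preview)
Your proof is correct and follows essentially the same approach as the paper. Part $(ii)$ is argued identically: the trivial bound for non-dangerous vertices, then the product of (total dangerous-vertex count) $\times$ (moves per branch-$(1)$ iteration) $\times$ (Breaker's bias) to bound the additional degree accumulated while a dangerous vertex waits. For part $(i)$ the paper uses a slightly cruder count --- it simply charges each dangerous vertex with at most $2n^{\varepsilon}$ embedded vertices (all the $T$-neighbours plus the path-of-length-three vertices), yielding $|S|\le |V(T')|+o(n^{0.6+\varepsilon})$ --- whereas you exploit the no-common-neighbour property of $L'$ to charge at most one $L'$-leaf per execution of branch $(1)$, giving the sharper $|S\cap L'|=O(n^{1/2+\alpha})$. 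Both accounts rest on Lemma~\ref{dang} and yield the same conclusion; your version simply avoids using the bound $\Delta(T)\le n^{\varepsilon}$ in part $(i)$. One small remark: in $(1.2)$ the dangerous available vertex may itself be embedded as a leaf of $L'$ during the path-of-length-three step (not strictly a ``closure step''), but since in that case the subsequent closure is vacuous, your per-iteration count of one $L'$-leaf still stands.
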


\begin{proof}
\begin{description}
\item [$(i)$] By Maker's strategy, for every dangerous vertex $v \in D$, Maker embeds less than $2n^{\varepsilon}$ other vertices ($n^{\varepsilon}$ vertices for closing $v$ and $3$ more if he is forced to first add $v$ to his tree via a path of length $3$ as described in part $(1.2)$ of Maker's strategy). Other than these vertices, Maker embeds only vertices of $T'$. It follows by Lemma~\ref{dang} that $|S| \leq |V(T')| + o(n^{0.6 + \varepsilon}) \leq (n - n^{2/3}) + o(n^{0.6 + \varepsilon}) \leq n - \frac{1}{2} n^{2/3}$ holds at any point during Stage 1.

\item [$(ii)$] This holds trivially for every non-dangerous vertex. By Maker's strategy, closing any dangerous vertex, requires at most $n^{\varepsilon} + 11 n^{\alpha}$ moves. It follows by Lemma~\ref{dang} there are at most $O(n^{1/2 + \alpha})$ dangerous vertices at any point during Stage 1. Moreover, as long as $D \neq \emptyset$, all of Maker's moves are dedicated to closing dangerous vertices and, after each move of Maker, Breaker claims just $n^{\alpha}$ free edges of $K_n$. Hence, when Maker tries to close a dangerous vertex $v$, it holds that $d_B(v) \leq \sqrt{n} + O(n^{\max\{\varepsilon, \alpha\}} n^{\alpha} n^{1/2 + \alpha}) = o(n^{0.6})$. Since, unless he has already won, Maker continues closing dangerous vertices for as long as $D \neq \emptyset$, it follows that $d_B(w) = o(n^{0.6})$ holds for every $w \in V(K_n) \setminus f(S)$ and for every $w \in f(S)$ which is open with respect to $T$, as claimed.
\end{description}
\end{proof}

\textbf{A proof that Maker can follow Stage 1 of his strategy for Case I:} 

For as long as $D = \emptyset$ (part $(2)$ of Maker's strategy), Maker claims a free edge
$(v,u)$, where $v' = f^{-1}(v)$ is open with 
respect to $T'$ and $u \in V(K_n)$ is available. 
This is always possible since, unless Maker has already won, there 
must exist an open vertex $v'$ with respect to $T'$. Moreover,
since $D = \emptyset$, it follows that $d_B(v) < \sqrt{n}$. However,
it follows by part $(i)$ of Lemma~\ref{maintainProperties} that 
$|V(K_n) \setminus f(S)| \geq \frac{1}{2} n^{2/3}$. Hence, there exists an available vertex 
$u$ for which $(v,u)$ is free.

It remains to consider part $(1)$ of Maker's strategy, that is, the case $D \neq \emptyset$.

\emph{Part $(1.1)$ of Maker's strategy -- $v$ is already taken.}
It follows that $v' = f^{-1}(v)$ is open with respect to $T$. 
Maker's goal is to close $v$ in $T$, 
that is, to embed all vertices of $N_T(v') \setminus S$ into available vertices of
$V(K_n)$. We claim that the required edges exist. Indeed, it follows by
part $(ii)$ of Lemma~\ref{maintainProperties} that $d_B(v) = o(n^{0.6})$ 
holds for as long as $v$ is open. Since $|V(K_n) \setminus f(S)| \geq \frac{1}{2} n^{2/3} > n^{0.6}$ holds by part $(i)$ of Lemma~\ref{maintainProperties}, it follows that the
required available vertices and corresponding free edges exist.

\emph{Part $(1.2)$ of Maker's strategy -- $v$ is an available vertex.} In this case, Maker must 
first add $v$ to his current tree. As noted in his strategy, if there exists 
an open vertex $u$ such that the edge $(u,v)$ is free, then Maker claims this edge.
Otherwise, Maker connects $v$ to an open (with respect to $T$) vertex of his 
current tree via a path of length three. First, we claim that there exists a taken vertex $u$, such that $u' := f^{-1}(u)$ is open with respect to $T$, and new vertices $v', x', y' \in V(T)$ such that $(u', x'), (x', y'), (y', v') \in E(T)$. Indeed, assume for the sake of contradiction that this is not the case. It follows from our assumption that, for every new vertex $a' \in V(T)$, there exists an open vertex $b' \in V(T)$ such that $dist_T(a',b') \leq 2$. Hence, every new vertex is an element of $L \cup N_T(L)$. It follows by the definition of $L'$ and by part $(i)$ of Lemma~\ref{maintainProperties} that at least $\frac{1}{2} n^{2/3}$ of the vertices of $N_T(L)$ are either open or new (in which case it must have an open neighbor). Hence, the number of open vertices in Maker's tree with respect to $T$ is at least $\frac{n^{2/3}/2}{\Delta(T)} \geq \frac{1}{2} n^{2/3 - \varepsilon} > n^{0.6}$. On the other hand, since Maker is trying to follow part $(1.2)$ of his strategy, it follows that there is no free edge $(w,v) \in E(K_n)$ such that $w$ is taken and $w' := f^{-1}(w)$ is open with respect to $T$. Hence, the number of open vertices with respect to $T$ at this point is at most $d_B(v) = o(n^{0.6})$, where this equality holds by part $(ii)$ of Lemma~\ref{maintainProperties}. This is clearly a contradiction. 

Next, we prove that Maker can embed $v', x'$, and $y'$ into appropriate available 
vertices, that is, that he can claim free edges 
$(u,x),(x,y)$, and $(y,v)$, where $x$ and $y$ are available vertices. 
Moreover, we prove that this entire phase takes at most $11 n^{\alpha}$ moves. 
Finally, we prove that Maker can then close $v$. It follows that Maker can follow this
part of his strategy without forfeiting the game. Let $A := V(K_n)
\setminus \left(f(S) \cup \{v\} \cup N_B(u) \cup N_B(v) \right)$.
Let $I \subseteq A$ be an independent set in Breaker's graph
of size $|I| \geq |A|/(\Delta(B) + 1) \geq 100n^{2\alpha}$. Such an
independent set exists since $|A| \geq \frac{1}{2} n^{2/3} - 1 -
o(n^{0.6})$ by parts $(i)$ and $(ii)$ of Lemma~\ref{maintainProperties},
and since Breaker's graph, induced on the vertices of $A$, has
maximum degree at most $o(n^{0.6})$ by property $(ii)$ of
Lemma~\ref{maintainProperties}. In the first $5n^{\alpha}$ moves of
this phase, Maker claims $5n^{\alpha}$ arbitrary edges of $\{(v,w) :
w \in I\}$. This is possible as $|\{(v,w) : w \in I\}| = |I| >
5n^{\alpha} + 5n^{2\alpha}$, and Breaker can claim at most
$5n^{2\alpha}$ of these edges. Let $N_v \subseteq I \cap N_M(v)$ be
an arbitrary set of size $5n^{\alpha}$. In his next $5n^{\alpha}$ moves, 
Maker claims $5n^{\alpha}$ edges of $\{(u,w) : w \in I \setminus N_v\}$. 
This is possible as $|\{(u,w) : w \in I \setminus N_v\}| = |I \setminus N_v| 
> 5n^{\alpha} + 10n^{2\alpha}$, and during this entire phase, Breaker 
has claimed at most $10n^{2\alpha}$ of these edges. Note that it is possible 
that Maker has already claimed some edges of $\{(v,w) : w \in I\} \cup 
\{(u,w) : w \in I \setminus N_v\}$ during some previous stage of the game. 
In this case he will need less than $10 n^{\alpha}$ moves to achieve his goal; 
clearly this does not harm him. Now, Maker can claim a free edge $(x,y)$, where 
$y \in N_v$ and $x \in (I \setminus N_v) \cap N_M(u)$. The required 
edge exists since there are at least $|N_v||(I \setminus N_v) \cap N_M(u)| 
\geq 25 n^{2\alpha}$ such edges in $K_n$, and during this entire phase, 
Breaker has claimed at most $10n^{2\alpha}$ of them.
Maker embeds $x'$ into $x$, $y'$ into $y$, and $v'$ into $v$. This completes 
the required path of length three. Note that embedding all these
vertices takes at most $11 n^{\alpha}$ moves. Finally, Maker closes
$v$ by embedding all of its neighbors (as in part $(1.1)$ of his strategy).

\textbf{A proof that Maker can follow Stage 2 of his strategy for Case I:}
 
Let $L'' \subseteq L'$ denote the set of leaves
which have not been embedded in Stage 1. Let $H = (X \cup Y, F)$ be 
the bipartite graph with $X = V(K_n) \setminus f(S)$, $Y = f(N_T(L''))$, 
and $F = \{(u,v) \in E(K_n \setminus B) : u \in X, v \in Y\}$. Note that, 
by the choice of $L'$, no two leaves of $L''$ have a common neighbor in $T$. 
Hence, in order to complete the embedding of $T$ in $K_n$, Maker has to 
claim the edges of a perfect matching of $H$, in a $(1 : b)$ game against 
Breaker, where $b \leq n^{\alpha}$. This is possible by Proposition~\ref{matchingE},
since $|Y| = |f(N_T(L''))| = |L''| = |X|$, $|L''| \geq \frac{1}{2}n^{\frac{2}{3}}$ 
by part $(i)$ of Lemma~\ref{maintainProperties}, $\Delta(B) \leq n^{0.6} \leq |X|^{0.95}$ 
in the beginning of Stage 2 by part $(ii)$ of Lemma~\ref{maintainProperties}, 
and $n^{\alpha} \leq \frac{|X|}{12\log_2 |X|}$. 

Note that Stage 1 and Stage 2 together last at most $(n + o(n^{0.6 + \alpha})) + 
O(n^{2/3} \log (n^{2/3})) = n + o(n)$ moves. 

This concludes the proof that Maker can win the $(1 : b)$ game ${\mathcal T}_n$
if $T$ is as in Case I.

\subsection{Following Maker's strategy for Case II}

In this subsection we prove that Maker can indeed follow every part of
his strategy for Case II. We prove this separately for Stage 1 and for Stage 2.
First, we prove the following lemma, which will be used in our proof that Maker can 
follow Stage 2 of his strategy for Case II.

\begin{lemma} \label{HamPathBetFixedab}
Let $\beta > 0$ and $0 < \gamma < 1/8$ be real numbers such that $\beta + 2 \gamma < 1$. 
Let $k$ be sufficiently large and let $q \leq k^{\gamma}$. Let $G=(V,E)$ be a
graph on $k$ vertices, with minimum degree at least $k - k^{\beta}$,
and let $a,b \in V$ be two vertices. Then, playing a $(1 : q)$ game
on $E$, Maker can build a Hamilton path, whose endpoints are $a$ and $b$, 
within $k + o(k)$ moves.
\end{lemma}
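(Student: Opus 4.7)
The plan is to combine a block-decomposition with the parallel games machinery of Theorem~\ref{ParallelGames}. First, I would pick $2\ell-2$ ``connector'' vertices from $V\setminus\{a,b\}$ and designate them, together with $a$ and $b$, as endpoint pairs $(a_i,b_i)$ for $i=1,\ldots,\ell$, setting $a_1=a$, $b_\ell=b$, and arranging things so that consecutive pairs share a single linking edge $(b_i,a_{i+1})$ for $1\le i<\ell$. The target Hamilton path from $a$ to $b$ will then be the concatenation of Hamilton paths in each block joined by these connector edges.

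Next, I would apply Lemma~\ref{randomPartition} to the complement graph $\bar G$ (whose maximum degree is at most $k^{\beta}\le k^{0.95}$), with $L=\{a_i,b_i : 1\le i\le \ell\}$, to obtain a partition $V\setminus L=V_1\cup\cdots\cup V_\ell$ with roughly balanced block sizes $k_i$. The partition lemma guarantees that on each augmented block $V_i\cup\{a_i,b_i\}$, the complement $\bar G$ has maximum degree at most $10 k_i k^{-0.05}$; equivalently, $G$ restricted to each block is very dense (minimum degree $k_i - o(k_i)$).

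Using Theorem~\ref{ParallelGames}, Maker would play the $\ell$ sub-games simultaneously, aiming in each block to build a Hamilton path from $a_i$ to $b_i$ using at most $|V_i|+1+o(|V_i|)$ moves. I envisage implementing each sub-game via a P\'osa-style rotation-extension strategy adapted to the biased Maker-Breaker setting: Maker grows a path from $a_i$ by extending its current free endpoint into a new neighbor, and whenever Breaker blocks all extensions, Maker uses a small number (total $o(|V_i|)$) of chord moves to rotate the path and unlock new reachable endpoints; eventually he closes at $b_i$. The high density of each block (via Lemma~\ref{randomPartition}) and the moderately inflated effective bias $q'=q(1+\log(\ell+\lceil k/(q+1)\rceil))$ coming from the parallel games reduction together ensure success. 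Finally, Maker claims the $\ell-1$ connector edges $(b_i,a_{i+1})$, yielding the desired Hamilton path in a total of $\sum_{i=1}^{\ell}(|V_i|+1+o(|V_i|))+(\ell-1)=k+o(k)$ moves.

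The main obstacle is keeping the number of moves per block linear in $|V_i|$ rather than $O(|V_i|\log^2 |V_i|)$: a direct appeal to Proposition~\ref{HamiltonConnected} in each block would yield a total of $O(k\log^2 k)$ moves and blow the budget. Consequently, one must build each Hamilton path between prescribed endpoints directly, and the analysis of the rotation-extension procedure in the biased regime is the technical heart of the argument. The precise constraints $\gamma<1/8$ and $\beta+2\gamma<1$ enter exactly here: they guarantee that after the $O(\log k)$ bias inflation incurred from parallel play and the density loss $k^{\beta}$ from the original graph, the induced subgraphs on each block retain enough expansion to support the $o(|V_i|)$ rotations needed, while leaving Maker a tight linear edge budget per block.
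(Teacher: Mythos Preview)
Your plan has a structural misdirection and a genuine gap.

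\textbf{The block decomposition is superfluous.} You reduce the problem of building a Hamilton path between prescribed endpoints in $G$ to the \emph{same} problem in each block $G_i$. If your rotation--extension argument really gave a Hamilton $a_i$--$b_i$ path in $|V_i|+1+o(|V_i|)$ moves inside a block, the identical argument would give a Hamilton $a$--$b$ path in $k+o(k)$ moves in $G$ directly, with no partition, no Lemma~\ref{randomPartition}, and no Theorem~\ref{ParallelGames}. The parallel-games layer only \emph{inflates} the effective bias by a $\log k$ factor, so it cannot be helping you. (The partition-plus-parallel-games architecture is how the paper \emph{applies} the present lemma in the proof of the main theorem, not how it \emph{proves} it; you seem to have transplanted that outer structure inward.) Separately, the connector edges $(b_i,a_{i+1})$ are specific edges that Breaker may well have claimed by the time you ``finally'' reach for them.

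\textbf{The real gap is the rotation--extension step.} You identify it yourself as the technical heart, but give no mechanism for bounding the number of rotations. In the biased adversarial setting nothing prevents Breaker from repeatedly saturating the current free endpoint with $\Theta(k^{\gamma})$ edges per Maker move, and a naive P\'osa analysis does not yield $o(k)$ wasted chord-moves. The paper avoids rotations entirely. Its proof grows two paths $P_a$ and $P_b$ greedily from $a$ and from $b$, while maintaining a set $D$ of \emph{dangerous} vertices (those outside the paths, or at the free ends, whose Breaker-degree exceeds $k^{\delta'}$ for a suitably chosen $\delta'>\max\{\tfrac12+2\gamma,\beta\}$). Whenever $D\neq\emptyset$, Maker immediately absorbs a dangerous vertex into the interior of a path via a short detour (a path of length three, costing $O(k^{\gamma})$ moves), exactly as in Case~I of the main proof. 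A counting argument shows there are only $o(\sqrt{k})$ dangerous vertices ever, so the total overhead is $o(k)$. Maker halts the greedy growth when $\Theta(k^{\delta})$ vertices remain (with $\delta'+2\gamma<\delta<1$); by construction every remaining vertex and both free endpoints have Breaker-degree $\le 2k^{\delta'}$. On this small leftover set Maker then invokes Proposition~\ref{HamiltonConnected} to build a Hamilton connected graph in $O(k^{\delta}\log^2 k)=o(k)$ further moves, which supplies the missing Hamilton path joining the two free endpoints. The constraints $\beta+2\gamma<1$ and $\gamma<1/8$ are what allow the chain $\beta<\delta'$, $\tfrac12+2\gamma<\delta'$, $\delta'+2\gamma<\delta<1$ to be satisfied.
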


\begin{proof}
We present a strategy for Maker, and then prove that it is a winning
strategy. 

\textbf{Maker's strategy:} 
Maker's strategy is divided into two stages.

\textbf{Stage 1:} Let $\delta$ and $\delta'$ be real numbers 
such that $\delta' > \max \{\frac{1}{2} + 2 \gamma, \beta\}$ and 
$\delta' + 2 \gamma < \delta < 1$. Maker builds two vertex disjoint paths
$P_a = (a, v_1, v_2, \ldots, v_i)$ and $P_b = (b, u_1, u_2, \ldots, u_j)$, such that
$k^{\delta} \leq k - (|V(P_a)| + |V(P_b)|) \leq 2 k^{\delta}$. While building these two paths,
Maker ensures that $d_B(u) \leq 2 k^{\delta'}$ holds for every 
vertex $u \in (V \setminus (V(P_a) \cup V(P_b))) \cup ((End(P_a) \cup End(P_b)) 
\setminus \{a, b\})$.

At any point during this stage, a vertex $v \in (V \setminus (V(P_a) \cup V(P_b)))
\cup ((End(P_a) \cup End(P_b)) \setminus \{a, b\})$ is called \emph{dangerous} 
if $d_B(v) \geq k^{\delta'}$. Throughout this stage, Maker maintains a set 
$D \subseteq V$ of dangerous vertices and two paths of $G$ whose edges he has claimed, 
$P_a$ and $P_b$, where $a$ is an endpoint of $P_a$ and $b$ is an endpoint of $P_b$.
Initially, $D = \emptyset$, $P_a = (a)$, and $P_b = (b)$. Maker updates $D$ 
after each move (by either player). If at some
point, Maker is unable to follow the proposed strategy, then he
forfeits the game. Moreover, if after claiming $2k$ edges, Maker has
not yet won, then he forfeits the game. 

For as long as $k - (|V(P_a)| + |V(P_b)|) \geq 2 k^{\delta}$, Maker plays as follows:

\begin{description}
\item [$(1)$] If $D = \emptyset$, then Maker extends the shorter of 
his two paths (breaking ties arbitrarily). Assume without loss of generality
that currently $P_a$ is shorter than $P_b$. If $P_a = (a)$, then let $x = a$, 
otherwise, let $x$ denote the unique element of $End(P_a) \setminus \{a\}$. 
Maker extends $P_a$ by claiming a free edge $(x,w)$ for some $w \in V \setminus 
(V(P_a) \cup V(P_b))$. 

\item [$(2)$] If $D \neq \emptyset$, then Maker plays as follows. Let $v \in D$ 
be an arbitrary dangerous vertex. We distinguish between two sub cases: 
\begin{description}
\item [$(2.1)$] $v \in (End(P_a) \cup End(P_b)) \setminus \{a, b\}$. Maker claims a free edge $(v,w)$ for some $w \in V \setminus (V(P_a) \cup V(P_b))$. 

\item [$(2.2)$] $v \in V \setminus (V(P_a) \cup V(P_b))$.  
First, Maker adds $v$ to $P_a$. Let $x$ denote the unique element of 
$End(P_a) \setminus \{a\}$. Maker connects $v$ to $x$ via a path of length three, 
within at most $11 k^{\gamma}$ moves. Once $v \in V(P_a)$, Maker extends
$P_a$ by one more edge, as in $(1)$ above.
\end{description}
\end{description}

\textbf{Stage 2:} Let $G' = G[(V \setminus (V(P_a) \cup V(P_b))) \cup 
((End(P_a) \cup End(P_b)) \setminus \{a, b\})]$. Maker builds a
Hamilton connected subgraph of $G'$.

Note that, if Maker can indeed follow all parts of the proposed strategy, then, in
particular, he builds a Hamilton path in $G$ whose endpoints are $a$ and $b$ 
(though possibly not fast enough).

Next, we prove that Maker can indeed follow every part of
his suggested strategy. We prove this separately for Stage 1 and for Stage 2.
First, we prove the following two lemmas.

\begin{lemma} \label{dang1}
At any point during Stage 1 there are $o(\sqrt{k})$ dangerous
vertices.
\end{lemma}

\begin{proof}
By Maker's strategy, the game lasts at most $2k$ moves. Since, moreover, every
dangerous vertex has degree at least $k^{\delta'}$ in Breaker's graph, it follows that
there can be at most $2 \cdot \frac{2k^{1+\gamma}}{k^{\delta'}} = o(\sqrt{k})$ 
such vertices.
\end{proof}

\begin{lemma} \label{maintainProperties1}
The following two properties hold at any point during Stage 1:
\begin{description}
\item [$(i)$] $k - |V(P_a)| - |V(P_b)| \geq k^{\delta}$.
\item [$(ii)$] $d_B(v) \leq 2 k^{\delta'}$ holds for every $v \in (V \setminus (V(P_a) \cup V(P_b))) \cup ((End(P_a) \cup End(P_b)) \setminus \{a, b\})$.
\end{description}
\end{lemma}

\begin{proof}
\begin{description}
\item [$(i)$] According to his strategy, Maker tries to stop extending his paths at
the very moment $k - |V(P_a)| - |V(P_b)| \leq 2 k^{\delta}$ happens for the first time.
Maker can stop at this exact moment unless he is in the middle of part $(2.2)$ of his 
strategy. In this case he adds at most $3$ additional vertices. It follows that
$k - |V(P_a)| - |V(P_b)| \geq 2 k^{\delta} - 3 \geq k^{\delta}$ as claimed.

\item [$(ii)$] This holds trivially for any non-dangerous vertex. By Maker's
strategy, adding any dangerous vertex to the interior of $P_a \cup P_b$, 
requires at most $11 k^{\gamma}$ moves. By Lemma~\ref{dang1} there are
$o(\sqrt{k})$ dangerous vertices at any point of the game. Moreover,
as long as $D \neq \emptyset$, all of Maker's moves are dedicated to
adding dangerous vertices to the interior of $P_a \cup P_b$. Hence, when 
Maker tries to add a dangerous vertex $v$ to the interior of $P_a \cup P_b$, 
it holds that $d_B(v) \leq k^{\delta'} + o(k^{\gamma}k^{\gamma} \sqrt{k}) 
\leq 2 k^{\delta'}$. Since, unless he has already won, Maker continues adding 
dangerous vertices to the interior of $P_a \cup P_b$, for as long as $D \neq \emptyset$,
it follows that $d_B(w) \leq 2 k^{\delta'}$ holds for every $w \in (V \setminus (V(P_a) \cup V(P_b))) \cup ((End(P_a) \cup End(P_b)) \setminus \{a, b\})$ as claimed.
\end{description}
\end{proof}

\textbf{Stage 1:} 
If $D = \emptyset$, then Maker can extend either path. Indeed, assume that Maker wishes
to extend $P_a$. Let $x$ denote the unique element of $End(P_a) \setminus \{a\}$ (or $x=a$ 
if $P_a = (a)$). It follows by part $(i)$ of Lemma~\ref{maintainProperties1} that 
$k - |V(P_a)| - |V(P_b)| \geq k^{\delta}$. Since $d_B(x) \leq k^{\delta'}$, the minimum
degree of $G$ is at least $k - k^{\beta}$, and $\delta > \delta' > \beta$, it follows that there exists a vertex $w \in V \setminus (V(P_a) \cup V(P_b))$ for which the edge $(x,w)$ is free. 

If $D \neq \emptyset$, then Maker has to add some vertex $v \in D$ to the interior of $P_a \cup P_b$. The fact that he can indeed achieve this goal, and moreover do so within at most $11 k^{\gamma}$ moves, follows by essentially the same argument used to show that Maker can follow part $(1)$ of Stage 1 of his strategy for Case I. We omit the straightforward details (note that here is where we use the fact that $\delta > \delta' + 2 \gamma$). 

\textbf{Stage2:}
It follows by the fact that Maker can follow his strategy for Stage 1 without forfeiting the game and by part $(i)$ of Lemma~\ref{maintainProperties1} that $|V(G')| = \Theta(k^{\delta})$. It follows by part $(ii)$ of Lemma~\ref{maintainProperties1} that the minimum degree of $G'$ is at least $|V(G')| - 1 - 2 k^{\delta'} = |V(G')| - \Theta(|V(G')|^{\delta'/\delta})$. Since, moreover, $q \leq k^{\gamma} \leq \frac{|V(G')|}{\log^2 |V(G')|}$, it follows by Proposition~\ref{HamiltonConnected} that Maker can indeed build a Hamilton connected subgraph of $G'$. Furthermore, he can do so within $O(|V(G')| \log^2 |V(G')|) = o(k)$ moves.

\end{proof}

\textbf{A proof that Maker can follow Stage 1 of his strategy for Case II:}

Clearly, there are at most $|D_1| + |D_{>2}|$ inclusion maximal paths with at least one endpoint in $D_{>2}$. Since, except for vertices of $D_{>2}$, Maker embeds only vertices of bare paths whose length is at most $n^{0.2}$, it follows that, during Stage 1 Maker tries to embed at most $|D_{>2}| + (|D_1| + |D_{>2}|) n^{0.2} \leq 3 |D_1| n^{0.2} \leq 3 n^{2/3 + \varepsilon + 0.2} = o(n^{0.92})$ vertices, where the first inequality above follows by Lemma~\ref{leaves}. Since, moreover, $b \leq n^{\alpha}$ where $\alpha < 0.005$, it follows that at any point during Stage 1, the maximum degree in Breaker's graph is $o(n^{0.95})$, whereas the number of vertices that have not yet been embedded is $(1-o(1))n$. Hence, there are always free edges that extend the embedding of $F$. 

\textbf{A proof that Maker can follow Stage 2 of his strategy for Case II:}

By Maker's strategy, it remains to embed $\ell$ bare paths, for some $1 \leq \ell \leq 2 n^{2/3 + \varepsilon}$. For every $1 \leq i \leq \ell$, let $n_i$ denote the length of the $i$th path and let $a_i$ and $b_i$ denote its endpoints. Note that $n_i \geq n^{0.2}$ and that $a_i$ and $b_i$ were already embedded in Stage 1. Let $\tilde{F}$ denote the set of vertices of $K_n$ into which the vertices of $F$ were embedded in Stage 1. First, Maker partitions the remaining board into $\ell$ disjoint parts $V(K_n) \setminus \tilde{F} = V_1 \cup \ldots \cup V_{\ell}$ such that
\begin{description}
\item [$(i)$] $|V_i| = n_i - 1$.
\item [$(ii)$] The minimum degree of $G_i := (K_n \setminus (M \cup B))[V_i \cup \{a_i, b_i\}]$ is at least $n_i - 
10 n_i n^{-0.05}$.
\end{description}

Such a partition exists by Lemma~\ref{randomPartition}. Maker plays a $(1 : b)$ game
on $\bigcup_{i=1}^{\ell} G_i$, which he wins if and only if, for every $1 \leq i \leq \ell$, he is able to build a Hamilton path of $G_i$ between $a_i$ and $b_i$. Clearly, if Maker wins this game, then he completes the embedding of $T$ in $K_n$. By Theorem~\ref{ParallelGames}, in order to prove that Maker wins the aforementioned game, it suffices to prove that for every $1 \leq i \leq \ell$, Maker can win a $\left(1 : b \left(1 + \log \left(\ell + \lceil \frac{n^2}{b+1} \rceil \right)\right)\right)$ game played on $E(G_i)$, which he wins if and only if he is able to build a Hamilton path of $G_i$ whose endpoints are $a_i$ and $b_i$. However, since $|V(G_i)| \geq n^{0.2}$ and $b \left(1 + \log \left(\ell + \lceil \frac{n^2}{b+1} \rceil \right)\right) \leq 2n^{\alpha} \log n \leq |V(G_i)|^{1/40}$, this follows by Property $(ii)$ above and by Lemma~\ref{HamPathBetFixedab}. Moreover, since, by Lemma~\ref{HamPathBetFixedab}, for every $1 \leq i \leq \ell$, Maker wins the game on $E(G_i)$ in $n_i + o(n_i)$ moves, it follows by Theorem~\ref{ParallelGames} that Maker wins the game on $\bigcup_{i=1}^{\ell} G_i$ in $\sum_{i=1}^{\ell} (n_i + o(n_i))$ moves.

Note that Stage 1 and Stage 2 together last at most
$o(n) + (\sum_{i=1}^{\ell} (n_i + o(n_i))) = n + o(n)$ moves. 

This concludes the proof that Maker can win the $(1 : b)$ game ${\mathcal T}_n$
if $T$ is as in Case II.

\section{Concluding remarks and open problems} \label{sec::openprob}

\begin{description}
\item [Breaker's bias and the maximum degree of $T$.]
In this paper it is proved that, given any tree $T$ on $n$ vertices whose maximum degree is not too large, Maker can build a copy of $T$ within $n + o(n)$ moves, when playing a biased game on $E(K_n)$. While the obtained upper bound on the duration of the game is clearly very close to being optimal for sufficiently large $n$, the upper bounds on the maximum degree of $T$ and on Breaker's bias are probably quite far from being best possible. It would be interesting to improve either bound, even at the expense of the other. In particular we offer the following two questions:
\begin{problem} \label{degree}
Let $n$ be sufficiently large and let $T=(V,E)$ be a tree on $n$ vertices. What is the largest integer $d = d(n)$ for which Maker has a winning strategy for the $(1 : 1)$ game ${\mathcal T}_n$, assuming that $\Delta(T) \leq d$?
\end{problem}

\begin{problem} \label{bias}
Let $n$ be sufficiently large and let $T=(V,E)$ be a tree on $n$ vertices with constant maximum degree. What is the largest integer $b = b(n)$ for which Maker has a winning strategy for the $(1 : b)$ game ${\mathcal T}_n$?
\end{problem}   

\item [Other spanning graphs.]
It would be interesting to analyze analogous games for general spanning graphs (not necessarily trees) of bounded degree. It was proved in~\cite{HS} that, for sufficiently large $n$, Maker can build a Hamilton cycle of $K_n$ in a $(1 : 1)$ game, within $n+1$ moves. However, even if we restrict our attention to $(1 : 1)$ games on $E(K_n)$ in which Maker's goal is to build a copy of some predetermined $2$-regular spanning graph, we cannot expect him to win very quickly. Indeed, for every positive integer $n$ such that $3 \mid n$, let ${\mathcal TF}_n$ denote the game whose board is $E(K_n)$ and whose winning sets are all triangle factors of $K_n$. The following result was observed by Tibor Szab\'o and the third author.

\begin{theorem} \label{3-factor}
Maker cannot win the $(1 : 1)$ game ${\mathcal TF}_n$ in less than $7n/6$ moves.
\end{theorem}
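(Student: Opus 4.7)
The plan is to exhibit an explicit Breaker strategy and show that any Maker play producing a triangle factor must use at least $7n/6$ edges. The strategy is the natural \emph{cherry-closing} strategy: after each Maker move placing an edge $e_t = uv$, Breaker inspects the paths of length two in Maker's current graph that pass through $e_t$, and if any of them is \emph{open} (meaning that its closing chord is still unclaimed), Breaker claims one such chord, breaking ties arbitrarily; otherwise, Breaker plays an arbitrary free edge. One first verifies the trivial fact that Breaker can always follow this strategy while the game is in progress.

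Suppose for contradiction that after $m < 7n/6$ Maker-moves Maker's graph $M$ contains a triangle factor $F$. Fix such an $F$, and for each triangle $\Delta_j \in F$ define the following three distinguished vertices by looking at the order in which Maker played the three edges of $\Delta_j$: let $s_j$ be the unique vertex of $\Delta_j$ incident to both the first and the second factor edges of $\Delta_j$, let $q_j$ be the other endpoint of the first edge, and let $r_j$ be the other endpoint of the second edge. Write $\beta_j$ for the step at which Maker plays the second factor edge $s_jr_j$. The key observation is that right after move $\beta_j$ the path $q_j - s_j - r_j$ is a Maker cherry whose chord $q_jr_j$, being the last remaining factor edge of $\Delta_j$, is still free; hence this cherry $C^*_j$ is open at Breaker's ensuing response. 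The strategy forces Breaker to close \emph{some} open cherry through $s_jr_j$, so if $C^*_j$ were the only candidate Breaker would claim $q_jr_j$ and $F$ would fail to lie in $M$---a contradiction. Thus another open cherry through $s_jr_j$ must exist at that moment, and since before move $\beta_j$ the only Maker edge at $s_j$ lying in $F$ is $s_jq_j$ and no factor edge at $r_j$ has been played yet, any such alternative cherry necessarily uses a Maker edge from $s_j$ or $r_j$ to a vertex outside $\Delta_j$, i.e., an edge in $E(M) \setminus E(F)$. In particular $d_M(s_j) \geq 3$ or $d_M(r_j) \geq 3$.

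To finish, note that the ``heavy'' vertex produced for each $\Delta_j$ lies inside $\Delta_j$, and since the $n/3$ factor triangles of $F$ are vertex-disjoint, these heavy vertices are pairwise distinct across the different triangles, yielding at least $n/3$ vertices with $d_M \geq 3$. All remaining vertices still have $d_M \geq 2$ coming from their two factor edges, so
\[
2m \;=\; \sum_{v \in V(K_n)} d_M(v) \;\geq\; 3\cdot\tfrac{n}{3} + 2\cdot\tfrac{2n}{3} \;=\; \tfrac{7n}{3},
\]
contradicting $m < 7n/6$. The only delicate step is the identification of the alternative open cherry with an extra Maker edge incident to $\{s_j, r_j\}$; this hinges on the careful definition of $r_j$ as an endpoint of the \emph{second-played} factor edge of $\Delta_j$, which guarantees that $r_j$ has no $F$-edges in Maker's graph at all just before $\beta_j$ and that $s_j$'s only $F$-neighbour in Maker's graph at that moment is $q_j$.
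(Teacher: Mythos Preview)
Your proof is correct and follows essentially the same approach as the paper: the cherry-closing strategy you describe is exactly Breaker's strategy in the paper, your vertices $s_j, q_j, r_j$ correspond to the paper's $y, x, z$, and the key observation that at step $\beta_j$ Breaker would have claimed $q_j r_j$ unless a second open cherry through $s_j r_j$ already existed (forcing an extra Maker edge at $s_j$ or $r_j$) is precisely the content of the paper's Claim~\ref{Claim1}. The final degree-sum count is identical.
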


\begin{proof}
We describe Breaker's strategy and then show that indeed, following this
strategy delays Maker's win for at least $7n/6$ moves. Without loss of
generality we assume that Maker is the first player (otherwise, Breaker claims
an arbitrary edge in his first move). 

\textbf{Breaker's strategy:}
For every $i \geq 1$, let $e_i = (x_i, y_i)$ denote the edge claimed by 
Maker in his $i$th move. In his $i$th move, Breaker plays as follows.
If there exists a vertex $z \in V(K_n)$ such that $(x_i, z)$ was previously
claimed by Maker and $(y_i, z)$ is free, then Breaker claims $(y_i, z)$ 
(if there are several such edges $(y_i, z)$, then Breaker claims one of them arbitrarily).
If there exists a vertex $z \in V(K_n)$ such that $(y_i, z)$ was previously
claimed by Maker and $(x_i, z)$ is free, then Breaker claims $(x_i, z)$
(if there are several such edges $(x_i, z)$, then Breaker claims one of them arbitrarily). 
Otherwise, Breaker claims an arbitrary edge.

\begin{claim} \label{Claim1}
If Breaker follows the proposed strategy, then there is at least one vertex
of degree at least 3 in any triangle in Maker's graph. 
\end{claim}

\begin{proof}
Let $(x,y), (y,z)$ and $(x,z)$ be three edges in Maker's graph. Assume that 
the first of these three edges to be claimed by Maker was $(x,y)$ and the second
was $(y,z)$. Let $i$ denote the number of the move in which Maker has claimed $(y,z)$.
Since, on his $i$th move, Breaker did not claim $(x,z)$, it follows by the description
of his strategy that he must have claimed an edge $(y,u)$ or $(z,u)$ for some 
vertex $u \neq x$. Hence, $y$ or $z$ will have degree at least 3 in Maker's
graph after he will claim $(x,z)$. 
\end{proof}

Consider Maker's graph $M$ immediately after it admits a triangle factor 
for the first time. Clearly the minimum degree of $M$ is at least 2. 
Let $F$ be an arbitrary triangle factor of $M$. It follows by Claim~\ref{Claim1} 
that there is at least one vertex of degree at least three in every triangle of 
$F$. Hence, $2 e(M) = \sum_{v \in V} d_M(v) \geq 3 \frac{n}{3} + 2 \frac{2n}{3} = 
\frac{7n}{3}$. It follows that building $M$ took Maker at least $7n/6$ moves.  
\end{proof}

\end{description}

\end{document}